\documentclass[a4paper,11pt,reqno]{article}


\usepackage{a4wide, fullpage}
\usepackage[latin1]{inputenc}

\usepackage{graphicx}
\usepackage{amsmath}
\usepackage{amsthm}
\usepackage{amssymb}
\usepackage{color}
\usepackage[normalem]{ulem} 
\usepackage {ulem} 


\usepackage{enumerate}

\theoremstyle{plain}
\newtheorem{theorem}{Theorem}[section]
\newtheorem{proposition}[theorem]{Proposition}
\newtheorem{lemma}[theorem]{Lemma}

\newtheorem{definition}[theorem]{Definition}


\theoremstyle{definition}

\theoremstyle{remark}
\newtheorem{remark}[theorem]{Remark}


\newcommand{\N}{\ensuremath{\mathbb{N}}}
\newcommand{\Z}{\ensuremath{\mathbb{Z}}}
\newcommand{\R}{\ensuremath{\mathbb{R}}}

\newcommand{\E}{\ensuremath{\mathbb{E}}}

\newcommand{\var}{\ensuremath{{\rm{var}}}}
\newcommand{\cov}{\ensuremath{{\rm{cov}}}}

\newcommand{\noemi}[1]{\textcolor{black}{\textrm{#1}}}


\title{The ancestral process of long-range seed bank models}
\author{Jochen Blath\footnote{TU Berlin, Institut f\"ur Mathematik, Strasse des 17. Juni 136, D-10623 Berlin. \emph{blath@math.tu-berlin.de}},
 Adri\'an Gonz\'alez Casanova\footnote{Berlin Mathematical School and RTG 1845, TU Berlin, Strasse des 17. Juni 136, D-10623 Berlin. \emph{adriangcs@hotmail.com}},
 Noemi Kurt\footnote{TU Berlin, Institut f\"ur Mathematik, Strasse des 17. Juni 136, D-10623 Berlin. \emph{kurt@math.tu-berlin.de}}, 
Dario Span\`o\footnote{Department of Statistics, University of Warwick, Coventry CV4 7AL, UK. \emph{D.Spano@warwick.ac.uk}. D.S.'s research is partly supported by  CRiSM, an EPSRC-HEFCE UK grant.}}
\date{\today}

\begin{document}
\maketitle

\begin{abstract}
We present a new model for seed banks, where \noemi{direct ancestors of individuals may have lived in the near as well as the very far past.}
The classical Wright-Fisher model, 
as well as a seed bank model with bounded age distribution considered by Kaj, Krone and Lascoux (2001)
are special cases of our model. We discern three
parameter regimes of the seed bank age distribution, which lead to substantially different behaviour in terms of genetic variability, 
in particular with respect to fixation of types and  time to the most recent common ancestor. We prove that for age 
distributions with finite mean, the ancestral process converges to a time-changed Kingman coalescent, while in the case of infinite mean, ancestral lineages might not merge at all with positive probability. Further, we present
a construction of the forward in time process in equilibrium. The mathematical methods are based on renewal theory, the urn process introduced by Kaj et al.,
 as well as on a paper
by Hammond and Sheffield (2011).\\

\noindent
\emph{Keywords:} Wright-Fisher model, seed bank, renewal process, long-range interaction, Kingman coalescent.\\
AMS subject classification: 92D15, 60K05.
\end{abstract}

\section{Introduction}

In this paper we discuss a new mathematical model for the description of the genetic variability of neutral haploid populations of fixed size under the influence of a 
general {\em seed bank} effect. In contrast to previous models, such as the Kaj, Krone and Lascoux model \cite{KKL}, we are particularly interested in
situations where ancestors of individuals of the present generation may have lived in the rather remote past.

Seed banks are of significant evolutionary importance, and come in various guises. Typical situations range from plant seeds which fall dormant for several 
generations during unfavourable ecological circumstances \cite{Tellier, Vitalis}, fruit tissue preserved in Siberian permafrost
 \cite{Flower}, to bacteria turning into 
{\em endospores} if the concentration of nutrients in the environment falls 
below a certain threshold. Such endospores may in principle persist for an unlimited amount of time before they become active again (see, e.g.\ \cite{Cano}).
Seed bank related effects can be viewed as sources of genetic novelty \cite{Levin1990} and are generally believed to increase
 observed genetic variability. \\
In \cite{KKL}, a mathematical model for a (weak) seedbank effect is investigated, with the number of generations backwards in time that may influence the current population being bounded by a constant $m$ and being small when compared to the total population size (resp.\ during passage to a scaling limit).
Under such circumstances, it is then shown that the ancestral process of the population can be approximately described by a time-changed Kingman coalescent,
where the (constant) time change leads to a linear decrease of the coalescence rates of ancestral lineages depending on the square of the expected seedbank age distribution. 
Overall, genetic variability is thus increased (in particular if mutation is taken into account), but the qualitative features of the ancestral history of the population remain unchanged.

In the present paper, we consider \noemi{the ancestral process of } a neutral seed bank model with Wright-Fisher-type dynamics, assuming constant population size $N$. However, the distance measured in generations between direct ancestor and potential offspring will not assumed to be bounded, but rather sampled according to some (potentially unbounded) age distribution $\mu$ on $\mathbb N$. 
For $\mu=\delta_1$, we \noemi{recover the ancestral process of} the classical Wright-Fisher model, and scaling by the population size yields a Kingman coalescent as limiting ancestral process. For $\mu$ with bounded support, say with a maximum value $m$, independent of $N,$
we are in the setup of \cite{KKL}, and obtain a time change of Kingman's coalescent appearing in the limit 
(again after classical scaling).

Yet, some species suggest (i.e.\ bacteria transforming into endospores) that $\mu$ could be effectively unbounded, in particular non-negligible when compared to the population size. This can lead to entirely different regimes.

Our first result is that if $\mu$ has finite expectation, we again obtain a time-changed Kingman's coalescent after classical
rescaling. The behaviour of the model however changes completely if we assume $\mu$ to have infinite expectation. A natural example for age-distributions is a discrete measure $\mu$ with a power-law decay, that is
$$\mu(\{n,n+1,...\})=n^{-\alpha}L(n)$$
for some $\alpha >0$ and some slowly varying function $L.$ Depending on the choice of $\alpha$, we investigate the 
time to the most recent common ancestor (MRCA) of two individuals, if it exists. It turns out (Theorem \ref{thm:tmrca}) that for $\alpha>1/2,$ there is always
a common ancestor, but the expected time to the MRCA is finite if $\alpha>1$ and infinite if $\alpha<1.$ If $\alpha<1/2,$ any two 
ancestral lineages never meet at all with positive probability. 

\noemi{In the following section, we construct our model and present the main results. The proofs are given in Section 3. }

\section{Model and main results}

We work in discrete time (measured in units of non-overlapping generations) and with fixed finite population size $N\in\N.$ Time
 in generations is indexed by $\Z.$ The dynamics of the population forwards in time is given in the following way: 
Each individual chooses the generation of its father according to a law $\mu$
on $\N,$ \noemi{meaning that $\mu(n)$ gives the probability that the immediate ancestor of an individual of generation $i$ has lived in generation $i-n.$ We call $\mu$ the seedbank age distribution.} To avoid technicalities, we will always assume $\mu(\{1\})>0.$ After having chosen the generation, the individual
\noemi{picks the father unifomly} among the $N$ possible ancestors from that generation.

For concreteness, we will often assume that the age distribution $\mu$ is of the form $\mu=\mu_\alpha,$ with 
$$\mu_\alpha(\{n,n+1,...\})=n^{-\alpha}L(n), \quad n\in\N,$$
for some $\alpha \in (0,\infty)$ and some slowly varying function $L.$ Let $\Gamma_\alpha:=\{\mu_\alpha\}, \alpha\in(0,\infty)$
denote the set of all measures 
$\mu$ of this form. 
We are interested in the question of whether or not \noemi{in such a population a genetic type eventually fixates,} and if this happens in finite time almost surely. In the backward picture, this is related to asking if a finite set of 
individuals has a most recent common ancestor and when it lived.

\noemi{It turns out that in the above construction an ancestral line} can be described by a renewal process with interarrival law 
$\mu.$ The question of \noemi{existence of a common ancestor} and time to the most recent common ancestor can therefore be investigated via classical 
results of Lindvall \cite{Lindvall} on coupling times of discrete renewal processes, which are controlled in the 
power law case via applications of Karamata's Tauberian Theorem for power series, see e.g.\ \cite{BGT}. \noemi{This leads to three different regimes, see Theorem \ref{thm:tmrca}. If on the other hand one is interested in the forward in time process, mathematical modelling problems arise: In order to obtain a new generation of such a population, one requires information about the whole history, i.e. needs to start sampling at \lq$-\infty$\rq. In subsection \ref{sect:forward} we present a construction of such a population {\em in equilibrium}, which allows to study the correlations of the allele frequency process. This construction can be formalized in terms of Gibbs measures, following a paper of Hammond and Sheffield 
\cite{HammondSheffield}, where the case $N=1$ is considered in order to construct a discrete process with long-range 
correlations that converges to fractional
Brownian motion. This is sketched in the appendix.}

\subsection{Renewal construction of ancestral lineages and time to the most recent common ancestor}

We start with a descripton of the ancestral lineages of samples in our model in terms of renewal theory. Fix $N\in\N$ and a 
probability measure $\mu$ on the natural numbers. Let $v\in V_N:=\Z\times\{1,...,N\}$ denote an individual of our population. For $v\in V_N$ we write $v=(i_v,k_v)$ with $i_v\in\Z,$ and $1\leq k_v\leq N,$ hence $i_v$ indicating the generation 
of the individual in $\mathbb Z$, and $k_v$ the label among the $N$ individuals alive in this generation. 

The ancestral line $A(v)=\{v_0=v, v_1, v_2, \dots \}$ of our individual $v$ is a set of sites in $V_N$,
where $i_{v_0}, i_{v_1}, \dots \downarrow - \infty$ is a strictly decreasing sequence of generations,  with independent
decrements $i_{v_l}-i_{v_{l-1}}=:\eta_l, l\geq 1$ with distribution $\mu$, and where the $k_{v_0}, k_{v_1}, \dots$
are i.i.d.\ Laplace random variables with values in $\{1, \dots, N\}$, independent of $\{i_{v_l}\}_{l\in\N_0}$. Letting 
\noemi{$$S_n:=\sum_{l=0}^n \eta_l,$$} where we assume \noemi{$S_0=\eta_0=0$, we obtain a discrete renewal process with interarrival law $\mu.$ In the language of 
\cite{Lindvall_Book}, we say that a renewal takes place at each of the times $S_n,n\geq 0,$} and we write 
$(q_n)_{n\in\N_0}$ for the renewal sequence, that is, $q_n$ is the probability that $n$ is a renewal time. \\

It is now straightfoward to give a formal construction of the full ancestral process starting from $N$ individuals at time $0$
in terms of a family of $N$ independent renewal processes with interarrival law $\mu$ and a sequence of independent
uniform random variables $U^r(i), i \in -\mathbb N, r \in \{1, \dots, N\}$, with values in $\{1, \dots, N\}$ (independent also of the renewal processes).
Indeed, let the ancestral processes pick previous generations according to their respective renewal times, and then among
 the generations pick labels 
according to their respective
uniform random variables. As soon as at least two ancestral lineages hit a joint ancestor, their renewal processes couple, i.e.\ follow the 
same realization of one of their driving renewal processes (chosen arbitrarily, and discarding those remaining parts of the renewal processes and renewal times 
which aren't needed anymore).  In other words, their ancestral lines merge.

Denote by $P_N^\mu$ the law of the above ancestral process. For $v\in V_N$ with $i_v=0$, we have
%
%
\begin{equation}\label{eq:qn}q_n=P_N^\mu\Big(A(v)\cap\big(\{-n\}\times\{1,...,N\}\big)\neq \emptyset\Big),\end{equation}
and the probability
that $w\in V_N$ is an ancestor of $v,$ for $i_w<i_v,$ is given by 
$$P_N^\mu(w\in A(v))=\frac{1}{N}q_{i_v-i_w}.$$
For notational convenience, let us extend $q_n$ to
$n\in\Z$ by setting $q_n=0$ if $n<0.$ Note that $q_0=1.$

In \cite{KKL} it was proved that if $\mu$ has finite support, then the ancestral process, rescaled by the population size,
 converges to a time-changed 
Kingman-coalescent. Our first result shows that this remains true with the same classical scaling for $\mu$ with infinite support, as long as it has finite expectation.
We consider the ancestral process of a sample of $n\leq N$ individuals labelled $v_1,...,v_n$ sampled from generation $k=0.$ We define the equivalence relation $\sim_k$ on the set $\{1,...,n\}$ by
$$i\sim_k j\Leftrightarrow A(v_i)\cap A(v_j)\cap\big(\{-k,...,0\}\times\{1,...,N\}\big)\neq \emptyset,$$
that is $i\sim_k j$ if and only if $v_i$ and $v_j$ have a common ancestors at most $k$ generations back. Let $A_{N,n}(k)$ denote the set of equivalence classes with respect to $\sim_k,$ which is a stochastic process taking values in the partitions of $\{1,...,n\}.$ Let $E:=\{1,...,n\},$ and let $D_E[0,\infty)$ denote the space of c\`adl\`ag functions from $[0,\infty)$ to $E$ with the Skorohod topology.

\begin{theorem}
 \label{thm:convergence_to_kingman}
Assume \noemi{ $\E_\mu[\eta_1]<\infty.$ Let $\beta:=\frac{1}{\E_\mu[\eta_1]}.$} As $N\to\infty,$ the process 
$(A_{N,n}(\lfloor \frac{Nt}{\beta^2}\rfloor ))_{t\geq 0}$ converges weakly in $D_E[0,\infty)$ to Kingman's $n-$coalescent.
\end{theorem}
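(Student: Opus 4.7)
The plan is to follow the approach of \cite{KKL}: combine renewal-theoretic estimates for pairwise coalescence probabilities with arguments ruling out simultaneous multi-lineage mergers, and then deduce convergence of the partition-valued chain to Kingman's $n$-coalescent via convergence of transition kernels on a finite state space.

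I would first treat a single pair of lineages $A(v_i),A(v_j)$. Before they meet, both are driven by two independent renewal processes with interarrival law $\mu$ and independent uniform label choices in $\{1,\ldots,N\}$. In generation $-l$ the probability that both have a renewal is exactly $q_l^2$, and, conditional on this event, they coalesce with probability $1/N$. The aperiodicity hypothesis $\mu(\{1\})>0$ combined with $\mathbb E_\mu[\eta_1]<\infty$ lets me invoke Blackwell's renewal theorem to obtain $q_l\to\beta$. A clean way to pass to the limit is to write, for the pairwise coalescence generation $T_N$,
\[
\P(T_N > \lfloor Nt/\beta^2\rfloor)
= \mathbb E\Big[\prod_{l=1}^{\lfloor Nt/\beta^2\rfloor}(1-R_l/N)\Big],
\]
where $R_l\in\{0,1\}$ indicates a shared renewal at generation $-l$, with $\mathbb E[R_l]=q_l^2\to\beta^2$. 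Standard concentration plus $\log(1-R_l/N)=-R_l/N+O(1/N^2)$ then yields $\P(T_N>\lfloor Nt/\beta^2\rfloor)\to e^{-t}$, so $\beta^2 T_N/N \Rightarrow \mathrm{Exp}(1)$.

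For $n$ lineages, two further ingredients are needed. First, the probability that three or more lineages share a renewal generation and a common ancestor label is of order $q_l^3/N^2$ per generation, summing to $O(1/N)\to 0$ over the time horizon of length $N/\beta^2$; this rules out simultaneous multi-mergers in the limit. Second, distinct unmerged lineages remain driven by independent renewal and labelling randomness, so the $\binom{n}{2}$ pairwise coalescence clocks are asymptotically independent, each contributing rate $1$ after the time change. Combining these estimates, the time-rescaled one-step transition probabilities of $A_{N,n}$ converge, uniformly on the finite state space of partitions of $\{1,\ldots,n\}$, to those of Kingman's $n$-coalescent, and weak convergence in $D_E[0,\infty)$ follows by standard Markov-chain convergence criteria.

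The main obstacle is that the partition-valued process alone is not Markov: at each generation, the coalescence probability of a given pair depends on the residual inter-renewal times of each currently living lineage. The finite-mean assumption is precisely what allows these residuals to reach a near-stationary regime after a burn-in of $o(N)$ generations (again by Blackwell), so that the per-generation pairwise coalescence rate is uniformly $(1+o(1))\beta^2/N$ on the relevant time scale. Making this uniform approximation rigorous is the delicate point of the argument, and is exactly what fails in the infinite-mean regimes studied in Theorem~\ref{thm:tmrca}.
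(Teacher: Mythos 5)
Your pairwise computation is sound, and is in fact a more elementary route than the paper's for that step: the identity $\P(T_N>m)=\E\bigl[\prod_{l\le m}(1-R_l/N)\bigr]$ is exact, $q_l\to\beta$ by the discrete renewal theorem (aperiodicity coming from $\mu(\{1\})>0$), and the concentration of $\frac{1}{N}\sum_{l\le m}R_l$ that you need can be obtained, e.g., by noting that the intersection of the two independent renewal sets is itself a renewal set with mean interarrival time $1/\beta^2$, so the strong law for renewal processes applies (the $R_l$ are not independent, so ``standard concentration'' does require this or a covariance estimate). The paper instead obtains the per-generation rate $\binom{n}{2}\beta^2/N+O(N^{-2})$ by averaging the one-step merger probability against the stationary distribution $\mathrm{Mult}(n;\beta_1,\beta_2,\dots)$ of an urn process recording the residual lives of all $n$ lineages (Lemmas \ref{lem:stationary_measure} and \ref{lem:prob_coalescence}).

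The genuine gap is in the passage from one pair to the full $n$-coalescent, which is exactly the point you flag as ``delicate'' and then do not carry out. There are two related problems. First, your proposed final step --- convergence of one-step transition kernels of $A_{N,n}$ on the finite space of partitions --- is not available as stated because, as you yourself observe, $A_{N,n}$ is not Markov on that space: the one-step merger probability depends on the residual inter-renewal times of the surviving lineages. To run a transition-kernel (M\"ohle-type) argument one must lift to a Markov chain whose state records those residuals (the paper's urn process $X^{N,n}$, with countably infinite state space), identify its stationary distribution and show positive recurrence; none of this is in your sketch. Second, after each merger the surviving lineages sit in a residual configuration whose conditional law, given the merger event and its time, is not controlled by a single application of Blackwell's theorem; one must show that on the coalescent time scale the configuration relaxes back to (near-)stationarity before the next merger, uniformly over the outcome of the merger. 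The paper does this with a Doeblin coupling: the configuration couples with a stationary copy in a time $T_{coup}$ that is a.s.\ finite and independent of $N$ (here $\E_\mu[\eta_1]<\infty$ is used only through $P(T_{coup}<\infty)=1$, not $\E[T_{coup}]<\infty$), while the next merger takes time of order $N$, so the burn-in is instantaneous in the limit and the inter-coalescence times can be iterated. Your ``burn-in of $o(N)$ generations (again by Blackwell)'' gestures at this but supplies no mechanism for the required uniformity over the random, merger-dependent residual configuration; without it, neither the iteration over successive inter-coalescence times nor the convergence in $D_E[0,\infty)$ is established. A smaller omission: besides triple mergers you must also rule out two disjoint binary mergers in the same generation; this is again $O(N^{-2})$ per generation, as covered by the ``at least two mergers'' bound in Lemma \ref{lem:prob_coalescence}.
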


\noemi{Two individuals $v,w\in V_N$ have a common ancestor if and only if 
$A(v)\cap A(w)\neq \emptyset.$ If this is the case, and if
$v$ and $w$ belong to the same generation, we denote by $\tau$ the time to the most recent common ancestor,
$$\tau:=\inf\{n\geq 0: A(v)\cap A(w)\cap(\{-n\}\times\{1,...,N\})\neq \emptyset\}.$$
Clearly, the law of $\tau$ is the same for 
all $v,w$ with $i_v=i_w.$\\
Theorem \ref{thm:convergence_to_kingman} implies that if $\mu$ has finite expectation, two randomly sampled individuals have a common ancestor with probability 1, 
and the expected time to this ancestor is of order $N.$ If the expectation does not exist, this changes completely. Let us now assume that $\mu\in\Gamma_\alpha,$ which means that the tails of $\mu$ follow a power law. Our second result distinguishes three regimes:}

\begin{theorem}[Existence and expectation of the time to the most recent common ancestor]
\label{thm:tmrca}
Let $\mu\in\Gamma_\alpha$ and let $v,w\in V_N, v\neq w$
\begin{itemize}
 \item[(a)] If $\alpha\in(0,1/2),$ then $P_N^\mu(A(v)\cap A(w)\neq \emptyset)<1$ for all $N\in\N,$
\item[(b)] If $\alpha\in(1/2,1),$ then $P_N^\mu(A(v)\cap A(w)\neq \emptyset)=1$ and $E_N^\mu[\tau]=\infty$ for all $N\in\N.$
\item[(c)] If $\alpha>1,$ then $P_N^\mu(A(v)\cap A(w)\neq \emptyset)=1$ for all $N\in\N,$ and $\lim_{N\to\infty}\frac{E_N^\mu[\tau]}{N}=\frac{1}{\beta^2},$ with $\beta=\frac{1}{\E_\mu[\eta_1]}.$ 
\end{itemize}
\end{theorem}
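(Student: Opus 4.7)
The plan is to reduce the whole theorem to two ingredients: the asymptotic behaviour of the renewal sequence $(q_n)$ in the three regimes, and a simple geometric-trials argument for the label collisions at joint renewal times. Fix $v,w$ in the same generation; their ancestral lineages are driven by two independent copies of a renewal process with interarrival law $\mu$, together with fresh independent uniform $\{1,\dots,N\}$-labels chosen at each renewal. Call $k\geq 1$ a \emph{joint renewal time} if both renewal processes renew at $k$, and write $J:=\sum_{k\geq 1}\mathbb{1}_{\{k\text{ is a joint renewal}\}}$. At each joint renewal the two label choices coincide independently with probability $1/N$, so
\begin{equation}\label{plan:key}
P_N^\mu\bigl(A(v)\cap A(w)=\emptyset\bigr)=E\bigl[(1-1/N)^{J}\bigr],\qquad \tau\geq T_1:=\inf\{k\geq 1:k\text{ is a joint renewal}\}.
\end{equation}

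The second ingredient is Karamata's Tauberian theorem for power series applied to the renewal equation, following \cite{BGT}: for $\mu\in\Gamma_\alpha$ with $\alpha\in(0,1)$ one obtains $q_n\sim c_\alpha\, n^{\alpha-1}/L(n)$ with $c_\alpha>0$, while for $\alpha>1$ the elementary renewal theorem (applicable because $\mu(\{1\})>0$ guarantees aperiodicity) yields $q_n\to\beta$. Consequently $\sum_n q_n^2<\infty$ iff $\alpha<1/2$. Part (a) is then immediate: in that regime $E[J]=\sum_n q_n^2<\infty$, hence $J<\infty$ a.s., hence $E[(1-1/N)^J]>0$, proving $P_N^\mu(A(v)\cap A(w)\neq\emptyset)<1$ via \eqref{plan:key}. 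The existence claims in (b) and (c) come from observing that, by the strong Markov property applied to the pair of renewal processes at $T_1$, the joint renewal times are themselves a (possibly defective) renewal process, so $J+1$ is geometric with success parameter $P(T_1=\infty)$. Thus $E[J]<\infty$ forces $P(T_1=\infty)>0$; contrapositively, $\sum q_n^2=\infty$ (which holds for all $\alpha>1/2$) implies $P(T_1<\infty)=1$ and $J=\infty$ almost surely, so \eqref{plan:key} gives $P_N^\mu(A(v)\cap A(w)=\emptyset)=0$.

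For the infinite expectation claim in (b), I use the same self-similar joint renewal process: its renewal function is $E[J_n]=\sum_{k=1}^n q_k^2\sim c'\,n^{2\alpha-1}$ by Karamata. If $E[T_1]$ were finite, the elementary renewal theorem applied to this new process would force $E[J_n]\sim n/E[T_1]$, contradicting the sublinear growth $n^{2\alpha-1}$ since $\alpha<1$. Hence $E[T_1]=\infty$, and by \eqref{plan:key} we conclude $E_N^\mu[\tau]\geq E[T_1]=\infty$.

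For (c), the weak convergence of Theorem \ref{thm:convergence_to_kingman} specialised to two lineages gives $\tau/(N/\beta^2)\Rightarrow\mathrm{Exp}(1)$, a law of mean $1$, so it remains to establish uniform integrability of $\{\tau/N\}_N$ in order to pass to expectations. I would do this by a regenerative block argument: because $q_n\geq\beta/2$ for all $n\geq n_0$, a variance estimate on the count of joint renewals in an interval of length $N$ (using the renewal equation to bound covariances) shows that this count exceeds $\gamma N$ with probability uniformly bounded below, and on that event the conditional probability of surviving the block without merger is at most $(1-1/N)^{\gamma N}\leq e^{-\gamma}$. Iterating over disjoint blocks of length $N$ yields a uniform tail $P(\tau>kN)\leq C e^{-c k}$, forcing uniform integrability and hence $E_N^\mu[\tau]/N\to 1/\beta^2$. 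I expect the main technical obstacle to be precisely this last step: the indicators of joint renewals are not independent, so controlling their sum requires a careful covariance computation via the renewal equation, and the block length and constants have to be chosen uniformly in $N$.
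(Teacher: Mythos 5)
Your treatment of parts (a) and (b) is correct and is in substance the paper's own argument: the paper likewise computes the expected number of joint renewal times as $\sum_n q_n^2$, invokes its Lemma \ref{lem:bounds}(b) (quoted from Hammond--Sheffield) to decide finiteness, and concludes as you do; your exact identity $P_N^\mu(A(v)\cap A(w)=\emptyset)=E[(1-1/N)^J]$ together with the observation that $J$ is geometric with failure probability $P(T_1<\infty)$ is a cleaner packaging of the same idea. For $E_N^\mu[\tau]=\infty$ in (b) the paper simply uses $\tau\geq\eta_1$, so $E_N^\mu[\tau]\geq E_\mu[\eta]=\infty$ for $\alpha<1$; your elementary-renewal-theorem argument for $E[T_1]=\infty$ is also valid (and only needs $\sum_{k\le n}q_k^2=o(n)$, which follows from $q_k\le1$ and the Ces\`aro decay of $q_k$). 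One caveat: the pointwise asymptotic $q_n\sim c_\alpha n^{\alpha-1}/L(n)$ is \emph{not} what Karamata's Tauberian theorem delivers (it gives the partial sums $\sum_{n\le i}q_n$, as in Lemma \ref{lem:bounds}(a)), and for $\alpha\le 1/2$ the strong renewal theorem can genuinely fail; you should cite the finiteness criterion for $\sum_n q_n^2$ directly rather than derive it from a pointwise asymptotic.

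The genuine gap is in part (c). Your route (weak convergence plus uniform integrability) differs from the paper's, and the block argument as sketched does not close: iterating $P(\tau>kN)\leq Ce^{-ck}$ over disjoint blocks of length $N$ requires the conditional merger probability in each block to be bounded below \emph{uniformly over the state of the two renewal processes at the block boundary}, and no such uniform bound exists when $\mu$ is unbounded --- a residual interarrival time can exceed $N$, in which case the block contains no joint renewal at all, and for $\alpha\in(1,2)$ the stationary residual law even has infinite mean, so these bad events are not negligible for the expectation without further work. The repair is to notice that your own decomposition already finishes the proof without any uniform integrability: writing $\tau=T_G$ with $G\sim\mathrm{Geom}(1/N)$ independent of the i.i.d.\ joint interarrival times $T_j-T_{j-1}$, Wald's identity gives $E_N^\mu[\tau]=N\,E[T_1]$, and the Erd\H{o}s--Feller--Pollard renewal theorem applied to the joint renewal sequence $u_n=q_n^2\to\beta^2$ (aperiodic because $\mu(\{1\})>0$) gives $E[T_1]=1/\beta^2$. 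Hence $E_N^\mu[\tau]/N=1/\beta^2$ for every $N$, not merely in the limit. This is essentially the paper's argument, which writes $E_N^\mu[\tau]=N\,E[T]$ and invokes Lindvall's coupling result for the finiteness of $E[T]$.
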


In other words, for $\alpha>1/2$ two individuals almost surely share a common ancestor, but the expected time to the 
most recent common 
ancestor is finite for $\alpha>1$ and infinite if $\alpha\in(1/2,1).$ Hence in real-world populations observed over realistic time-scales,
for $\alpha\in(1/2,1)$ (or even for $\alpha\in(1,2)$ where the mean, but not the variance of $\mu$ exists), the assumption that a population is in equilibrium has to be treated with care. 

\begin{remark} In the boundary case $\alpha=1$, the choice of the slowly varying function $L$ becomes relevant. If we choose
$L=const.$, then it is easy to see from the proof that  $E_N^\mu[\tau]=\infty.$ The case $\alpha=1/2$ also depends on $L$ and
requires further investigation.
\end{remark}

\subsection{Forward in time process}\label{sect:forward}
Having obtained a good idea about the ancestral process, we would now like to study the forward picture. \noemi{For this it is useful to construct the whole bi-infinite genealogy of the whole population at once, which can be done as a spanning forest of a suitable vertex set.} We consider graphs -- in fact trees -- with vertex-set
$V_N=\Z^N$
and a set of bonds $E_N$ which will be a (random) subset of $B_N:=\{(v,w):v,w\in V_N\}$ where the edges are {\it directed}. 
For $v\in V_N$ we write as before $v=(i_v,k_v)$ with $i_v\in\Z,$ and $1\leq k_v\leq N.$ We consider the set of directed spanning 
forests of $V_N,$ which we can write down as follows: Let
$${\mathcal T}_N:=\{G=(V_N, E_N): E_N\subset B_N \mbox{ s.th. }\forall\,v\in V_N,\, \exists! \,
w\in V_N, i_w<i_v, \mbox{ with } e=(w,v)\in E_N\}.$$
This means, we consider trees where each vertex $v$ has exactly one outgoing (to the past) edge, which we denote by $e_v.$
This unique outgoing edge, or equivalently, the unique ancestor of $v$ is determined as follows. 
Let $\{\eta_v\}_{v\in V_N}$ be a countable family of independent $\mu-$ distributed random variables, and let $\{U_v\}_{v\in V_N}$ 
denote independent uniform random variables with values in $\{1,...,N\}$ independent of the $\eta_v.$ This infinite product measure induces 
a law on ${\mathcal T}_N$ if we define $$ e_v:=((i_v-\eta_v, U_v),v).$$ We denote this probability measure by $\hat{P}_N^\mu.$
In words, the ancestor of $v$ is found by sampling the generation according to $\mu,$ and then choosing the individual uniformly.
We see that 
\begin{equation}\label{eq:phat}\hat{P}_N^\mu(e_v=(w,v)\in E_N)=\frac{1}{N}\mu(i_v-i_w).\end{equation}
Comparing this to our previous construction of the ancestral process, we realise that  $P_N^\mu$ can be considered as being 
the restriction of $\hat{P}_N^\mu$
to situations regarding the ancestry of a sample, and hence, with slight abuse of notation, we will identify the two measures,
dropping the notation $\hat{P}_N^\mu.$ A tree $G\in{\mathcal T}_N$ is interpreted as the ancestral tree of the whole bi-infinite 
population. \noemi{
\begin{remark}
Note that for $\mu\in \Gamma_\alpha$ it follows from Theorem \ref{thm:tmrca} that $G\in \mathcal T_N$ has only one connected component almost surely if $\alpha>1/2,$ since two individuals belong to the same connected component if and only if their ancestral lines meet. If $\alpha<1/2,$ then $G$ has infinitely many connected components almost surely, since in that case any two individuals belong to two disjoint components with positive probability by Theorem \ref{thm:tmrca} (a). 
\end{remark}
Having obtained a construction of the genealogy of the population for all times, we can now for exampe introduce genetic types. We take the simplest situation of just two types. Let the individual $v\in V_N$ have type $X_v\in\{a,A\},$ and assume a neutral Wright-Fisher reproduction, that is, types are inherited from the parent. This means that in the above construction, individuals belonging to the same component of the tree have the same type. In particular, in the case $\alpha>1/2,$ everyone in the population has the same type. This is clear, since constructing the whole tree at once means that we are talking about a population in equilibrium, meaning that fixation of one of the two types has already occurred. However, in the case $\alpha<1/2$ the tree has infinitely many components almost surely, and therefore both types can persist for all times. We can assign to each component independently type $a$ with probability $p\in[0,1]$ and type $A$ otherwise. For each $p\in [0,1]$ this procedure defines a probability measure on $\{a,A\}^{V_N}.$
\begin{definition}
Let $\lambda_N^p$ denote the probability measure on $\{a,A\}^{V_N}$ which, given $G\in \mathcal T_N,$ assigns each connected component of $G$ independently type $a$ with probability $p,$ and type $A$ otherwise.
\end{definition} 
\begin{remark}
It can be shown, following \cite{HammondSheffield}, that in a certain sense the measures $\lambda_N^p$ are the only relevant probability measures on $\{a,A\}^{V_N}$ consistent with the dynamics of our population model. We make this precise in the appendix. For now, we just assume that the type distribution of our population is given by $\lambda_N^p.$
\end{remark}
We can now introduce the frequency process: Let $v:=(i,k)\in V_N,$ that is, $i$ denotes the genration of the individual, and $k$ its label among the $N$ individuals of generation $i.$ Let 
$$Y_N(i):=\frac{1}{N}\sum_{k=1}^N \mathbf{1}_{\{X_{i,k}=a\}}.$$
Our construction allows us to easily compute some correlations for the frequency process of the seed bank model.} Recall $q_n$ from the last section.

\begin{theorem}
\label{cor:correlations}Let $\lambda=\lambda_N^p.$ 
\begin{itemize}
 \item[(a)] $E_\lambda[Y_N(i)]=p \;\forall i\in\Z,$
\item[(b)] \noemi{If $\mu\in \Gamma_\alpha$ with} $\alpha>1/2,$ $\cov_\lambda(Y_N(0),Y_N(i))=p(1-p)\; \forall i\in\Z,\forall N\in\N,$
\item[(c)] If \noemi{$\mu\in \Gamma_\alpha$ with} $\alpha\in(0,1/2),$ we have $\lim_{N\to\infty}\cov_\lambda(Y_N(0),Y_N(i))=0,$
$$C(i):=\lim_{N\to\infty}{\mathrm{corr}}_{\lambda}(Y_N(0),Y_N(i))\in(0,1)\;\; \mbox{for all } i\in \Z,$$
and,
as $i\to\infty,$ for some constant $c$ and some slowly varying function $L,$ 
$$C(i)\sim \frac{ (1-\alpha)^2\cdot p(1-p)}{\Gamma(2-\alpha)^2\Gamma(2\alpha)\left(\sum_{n=0}^\infty q_n^2+1\right)}\cdot i^{2\alpha-1}L(i),$$
where $\sim$ means that the ratio of the two sides tends to 1, and the sum occurring in the denominator is finite.
\end{itemize}
\end{theorem}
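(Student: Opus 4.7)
The plan is to reduce every claim to an identity on the ancestral tree combined with a renewal-theoretic estimate. The crucial observation, valid for any two vertices $v,w\in V_N$, is that conditional on whether $v$ and $w$ lie in the same connected component of $G\in\mathcal{T}_N$ (an event I denote $v\sim w$), the types $X_v,X_w$ are either identical or independent. A one-line conditioning yields
$$\cov_\lambda\!\left(\mathbf{1}_{\{X_v=a\}},\mathbf{1}_{\{X_w=a\}}\right)=p(1-p)\,P_N^\mu(v\sim w).$$
Part (a) is then an immediate marginal calculation. For (b), the remark following the definition of $\mathcal T_N$ combined with Theorem~\ref{thm:tmrca} gives $P_N^\mu(v\sim w)=1$ whenever $\alpha>1/2$, so averaging over the $N^2$ label pairs produces $\cov_\lambda(Y_N(0),Y_N(i))=p(1-p)$ for all $i$ and $N$.

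For (c) I would represent the ancestral lineages of $(0,k)$ and $(i,l)$ as two independent renewal processes with interarrival law $\mu$ carrying i.i.d.\ uniform labels in $\{1,\dots,N\}$. At each generation visited by both processes the two labels agree independently with probability $1/N$, and the lineages merge at the first such agreement. Letting $Z^{(i)}$ denote the number of common renewal generations, this gives
$$P_N^\mu\bigl((0,k)\sim(i,l)\bigr)=\mathbb{E}\!\left[1-\left(1-\tfrac{1}{N}\right)^{Z^{(i)}}\right],$$
and the classical renewal identity $\sum_{n\ge 0}P(S_n=t)=q_t$ yields
$$\mathbb{E}\bigl[Z^{(i)}\bigr]=\sum_{s\ge 0}q_s\,q_{i+s}.$$
Karamata's Tauberian theorem applied to $Q(s)=1/(1-M(s))$ (see~\cite{BGT}) produces the asymptotic $q_n\sim \tfrac{\sin(\pi\alpha)}{\pi}\,n^{\alpha-1}L(n)^{-1}$ for $\mu\in\Gamma_\alpha$, which makes the above sum finite exactly when $\alpha<1/2$. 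The bound $N\bigl(1-(1-1/N)^{Z^{(i)}}\bigr)\le Z^{(i)}$ provides an integrable majorant, so dominated convergence delivers
$$N\cdot\cov_\lambda\bigl(Y_N(0),Y_N(i)\bigr)\longrightarrow p(1-p)\sum_{s\ge 0}q_s\,q_{i+s}\qquad(N\to\infty),$$
and in particular $\cov_\lambda(Y_N(0),Y_N(i))\to 0$.

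An analogous treatment of $\var_\lambda(Y_N(i))$, splitting the double label sum into $N$ diagonal pairs (Bernoulli variance $p(1-p)$) and $N(N-1)$ off-diagonal pairs (same generation, different fixed labels, so coupling opportunities appear only strictly before generation $i$), produces
$$N\cdot\var_\lambda\bigl(Y_N(i)\bigr)\longrightarrow p(1-p)\Big(1+\sum_{s\ge 1}q_s^2\Big).$$
Dividing the two limits gives the explicit limiting correlation
$$C(i)=\frac{\sum_{s\ge 0}q_s q_{i+s}}{1+\sum_{s\ge 1}q_s^2}\in(0,1);$$
positivity uses $\mu(\{1\})>0$ (so every $q_s$ is strictly positive), while strictness below one is Cauchy--Schwarz applied to the non-constant sequence $q$.

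The main obstacle is the tail estimate $C(i)\sim c\cdot i^{2\alpha-1}L(i)$. Substituting the Karamata asymptotic for $q_n$ into $\sum_{s\ge 0}q_s q_{i+s}$ and rescaling $s=iy$ formally produces
$$\sum_{s\ge 0}q_s\,q_{i+s}\sim \Big(\frac{\sin\pi\alpha}{\pi}\Big)^2\frac{i^{2\alpha-1}}{L(i)^2}\int_0^\infty y^{\alpha-1}(1+y)^{\alpha-1}\,dy,$$
the integral being the Beta function $B(\alpha,1-2\alpha)=\Gamma(\alpha)\Gamma(1-2\alpha)/\Gamma(1-\alpha)$, convergent exactly under $\alpha<1/2$. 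The delicate step is justifying the discrete-to-continuous passage uniformly in the slowly varying $L$: I would invoke Potter's bounds and the uniform convergence theorem for regularly varying functions from \cite{BGT} to dominate the error. A final rearrangement of gamma and sine factors via the reflection formula $\Gamma(\alpha)\Gamma(1-\alpha)=\pi/\sin(\pi\alpha)$ together with $\Gamma(2-\alpha)=(1-\alpha)\Gamma(1-\alpha)$ then recovers the displayed constant, the slowly varying function in the final tail absorbing the factor $L(i)^{-2}$.
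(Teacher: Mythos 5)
Your reduction $\cov_\lambda(\mathbf{1}_{\{X_v=a\}},\mathbf{1}_{\{X_w=a\}})=p(1-p)P_N^\mu(v\sim w)$ and the treatment of parts (a) and (b) coincide with the paper's (Lemma \ref{lem:cov}). For (c) the paper derives an \emph{exact} finite-$N$ identity $P_N^\mu(A(v)\cap A(w)\neq\emptyset)=\sum_{n}q_nq_{n+i}/(N+\sum_{n\geq1}q_n^2)$ by a first-intersection regeneration argument, whereas you write $1-\mathbb{E}[(1-1/N)^{Z^{(i)}}]$ and pass to the limit by dominated convergence; both are legitimate and give the same $N\to\infty$ behaviour of the covariance. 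Note, however, that your limiting correlation has denominator $1+\sum_{s\geq1}q_s^2=\sum_{s\geq0}q_s^2$, which differs by $1$ from the $\sum_{n\geq0}q_n^2+1$ appearing in the statement; the discrepancy traces to whether the $s=0$ term is allowed in the off-diagonal variance contributions (two \emph{distinct} fixed individuals of the same generation cannot meet in that generation, but the paper's convention of randomizing even the starting labels includes a $1/N$ chance that they do). As written you are therefore not producing the displayed constant, and this needs to be reconciled.

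The genuine gap is in the tail estimate. The paper imports $\sum_nq_nq_{n+i}\sim\frac{(1-\alpha)^2}{\Gamma(2-\alpha)^2\Gamma(2\alpha)}i^{2\alpha-1}L(i)$ wholesale from Hammond--Sheffield (Lemma \ref{lem:bounds}(c)), while you attempt to derive it from the pointwise asymptotic $q_n\sim\frac{\sin\pi\alpha}{\pi}n^{\alpha-1}L(n)^{-1}$, attributed to Karamata's Tauberian theorem. Karamata only yields the partial-sum statement $\sum_{n\leq i}q_n\sim c\,i^\alpha L(i)^{-1}$ (the paper's Lemma \ref{lem:bounds}(a)); upgrading this to individual terms requires eventual monotonicity of $(q_n)$ (monotone density theorem) or a strong renewal theorem, and for $\alpha\leq 1/2$ --- precisely the regime of part (c) --- the strong renewal theorem is known to \emph{fail} for some $\mu$ with regularly varying tails: $\limsup_n n^{1-\alpha}L(n)q_n$ can be infinite, and only the $\liminf$ and Ces\`aro versions are automatic. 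So your route to the key asymptotic breaks at its first step; the averaged quantity $\sum_nq_nq_{n+i}$ does have the claimed regular variation, but that must be proved by other means (as Hammond and Sheffield do). Finally, the constant your computation would produce, $\left(\frac{\sin\pi\alpha}{\pi}\right)^2B(\alpha,1-2\alpha)=\frac{\Gamma(1-2\alpha)}{\Gamma(\alpha)\Gamma(1-\alpha)^3}$, does not simplify to the displayed $\frac{(1-\alpha)^2}{\Gamma(2-\alpha)^2\Gamma(2\alpha)}=\frac{1}{\Gamma(1-\alpha)^2\Gamma(2\alpha)}$; the two differ by a factor $2\cos(\pi\alpha)$, so the asserted ``rearrangement of gamma and sine factors'' does not go through, and this mismatch (whichever side it originates from) must be resolved before the argument can be considered a proof of the theorem as stated.
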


\begin{remark}
 If $\alpha>1/2,$ we have that $\mathrm{corr}_\lambda(Y_N(0),Y_N(i))=1.$ \noemi{ This is clear, since in this case all individuals have the same type, and} $E_\lambda(Y_N(i))=p, \var_\lambda(Y_N(i))=p(1-p)$ and $\mathrm{corr}_\lambda((Y_N(0),Y_N(i))=1.$
\end{remark}

\section{Proofs}
\subsection{Proof of Theorem \ref{thm:convergence_to_kingman}}
\noemi{The proof of Theorem \ref{thm:convergence_to_kingman} follows ideas of \cite{KKL}, which we combine with a coupling argument relying on renewal theory. In certain steps we have to take particular care of the unboundedness of the support of the measure $\mu,$ these steps are carried out with particular care in Lemmas \ref{lem:stationary_measure} and \ref{lem:prob_coalescence}. Recall that for Theorem \ref{thm:convergence_to_kingman} we assumed that the expectation of the renewal process exists, i.e. $E_\mu[\eta_1]<\infty,$ which in the case $\mu\in \Gamma_\alpha$ holds for $\alpha>1.$} For the case $\alpha=1,$ finiteness of the expectation depends on the choice of the slowly varying function $L.$\\
We first introduce an `urn process' similar to the one introduced in \cite{KKL}, for measures $\mu$ with potentially unbounded support. The point is that
our ancestral process $A_N$ can then be realised as a simple function of this urn process.\\
Keep $N$ fixed. For $1 \le n \le N$ let 
$$
S_n:=\Big\{(x_1,x_2,...), x_i \in \mathbb{N}_0, \,\, \sum_{i=1}^\infty x_i=n\Big\}.
$$  
For $n\in\N$ we construct 
a discrete-time Markov chain $\{X^n(k)\}_{k\in\N_0}$ with values in $S_n$ that we will refer to as the $n-$sample process. Let 
$X^n(0)=(X_1^n(0), X_2^n(0),...)$ be such that $|X^n(0)|=n.$ We think of $X_i^n(0)\in \{0,..,n\}$ as the number of balls
 currently placed in urn number $i.$ Later, urns will correspond to generations, balls to individuals. The transition from time $k$ to
time $k+1$ is made by relocating the $X_1^n(k)$ balls in the first urn in a way that is consistent with the ancestral process of
our seed bank model, and shift the other urns including their contained balls one step to the left: Let $\sigma:\R^{\N}\to\R^{\N}:
(x_1,x_2,...)\mapsto (x_2,x_3,...)$ denote
 the one-step shift operator, and, for $l\in\N, $ let $R(l)$ be an $S_{l}-$valued
random variable which is multinomially distributed with infinitely many parameters:
$$R(l)\sim \mathrm{Mult}(l;\mu(1),\mu(2),...),$$
i.e. $R(i)$ is a random vector of infinite length, and $R_i(l)$ counts the number of outcomes that take value $i$ in $l$ independent
trials distributed according to $\mu.$ Define

\begin{eqnarray}
&& X^n(k+1) =\sigma \big(X^n(k)\big)+R(X^n_1(k)),\ \ \ k=0,1,\ldots
\end{eqnarray}

By definition, $X^n=\{X^n(k)\}_{k \in }$ is a Markov chain with (countably infinite) state space $S_n$ (see Figure 1).

\begin{figure}[h]
\begin{center}
 \includegraphics[scale=0.4]{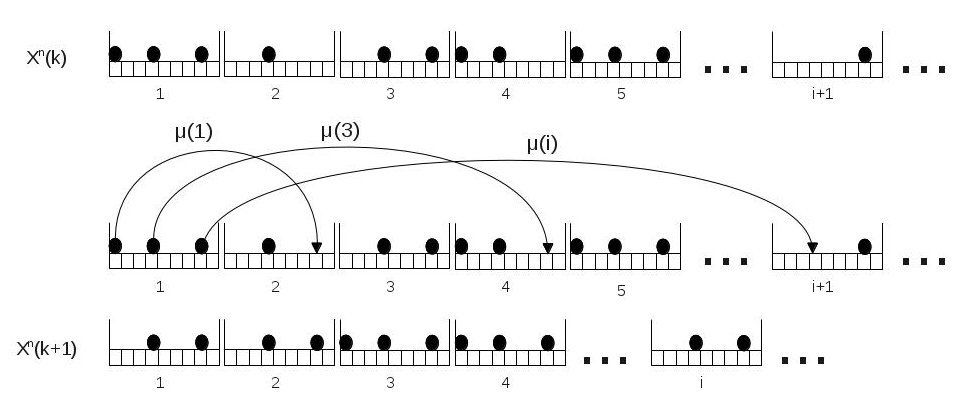}
\caption{\small Transition from $X^n(k)$ (top line) to $X^n(k+1)$ (bottom line): All the balls in urn number 1 are relocated independently according to $\mu.$}
\end{center}
\end{figure}

It provides a construction of $n$ independent
renewal processes with interarrival law $\mu$, if one keeps track of the balls. For our purpose, it suffices to note that $X_1^n(k)$ 
gives, for each $k,$ the number of renewal processes that have a renewal at after $k$ steps, which is equal in law  to the number of original individuals in our seed bank model that have 
an ancestor in generation $-k.$
Now recall our ancestral process $\{A_{N,n}(k)\}$ from Section 2,  which was constructed using coalescing renewal processes.
In terms of the $X^n-$process it can be described as follows: Think for the moment of each of the urns as being subdivided into $N$
sections. We start with $n$ balls and run the $X^n-$process. At each relocation step, each ball which is relocated to urn $i+1$
is put with equal probability into one of the $N$ sections in urn $i+1.$ All balls that end up in the same section within an urn 
are merged into a single ball (Figure 2).
 \begin{figure}[h]
\begin{center}
 \includegraphics[scale=0.4]{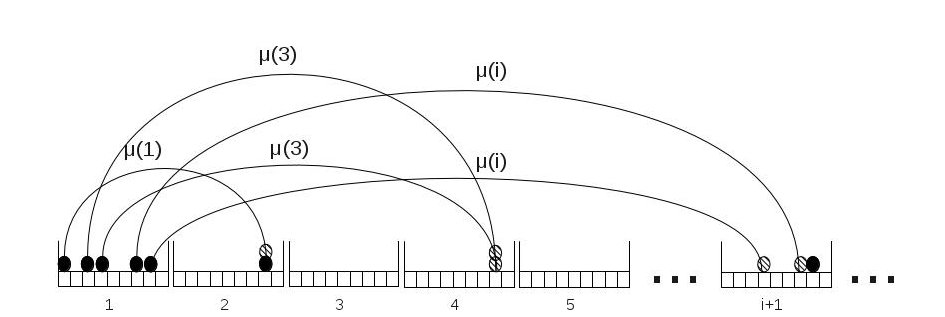}
\caption{\small The possible types of coalescence events in the $X^{N,n}-$process: A coalescent event in urn 2 induced by a ball
 landing in an occupied place, a coalescent event in urn 4 due to two balls landing in the same empty place,
 and no coalescence
in urn $i+1$ although it holds several balls.}
\end{center}
\end{figure}
Since this results in a decrease in the total number of balls, say from $n$ to $n' < n$, after a merger event, we continue 
to run accroding to a Markov process with law ${\cal{L}}(X^{n'})$ with $n'$ balls, and so on. Denote by $\{X^{N,n}(k)\}_{k\in\N}$ the well-defined process obtained by this procedure. 
The number of balls present at time $k$ in this
process is equal in law to the block-counting process of our ancestral process started with $n$ sampled individuals: 
$$|X^{N,n}(k)|\stackrel{d}{=}|A_{N,n}(k)|.$$

Unlike $A_N,$ the process 
$X^{N,n}=\{X^{N,n}(k)\}_{k\in\N}$ is a Markov chain in discrete time with countable state space $\cup_{i=1}^n S_i.$ Of course, it is also possible to define an 
exchangeable partition valued process as a function of $X^{N,n},$ where balls correspond to blocks (we refrain from a formal 
definition, in order to keep the notational effort reasonable).

\medskip

An important step is to observe that for each $n,$ the corresponding urn process $X^n$ has a unique invariant distribution. Indeed, let
$$
\beta_i:=\frac{\mu\{i,i+1,...\}}{E_\mu[\eta]}.
$$
This fraction is well-defined since we assumed $E_\mu[\eta]<\infty.$ Denote by $\nu^n:=\mathrm{Mult}(n,\beta_1,\beta_2,...)$ the multinomial
distribution with success probabilities $\beta_i.$ We claim that 
this is the stationary distribution for the $n-$sample process $X^n.$ From classical renewal theory, we know that $\nu^1$ is the 
stationary distribution in the case $n=1$ (see \cite{Lindvall_Book}). For $n$ independent renewal processes we have (cf. \cite{KKL}):

\begin{lemma} \label{lem:stationary_measure}
If $E_\mu[\eta]<\infty$, then $\nu^n$ is the stationary distribution for $X^n,$ and $X^n$ is positive
recurrent for all $n\in\N.$
\end{lemma}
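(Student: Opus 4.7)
The plan is to reduce everything to the single-ball case $n=1$, which is the classical discrete renewal equilibrium result, by observing that once we label the balls, $X^n$ is nothing but $n$ independent copies of the single-ball chain. Consequently, $(\nu^1)^{\otimes n}$ is stationary for the labelled dynamics, and pushing it forward under the aggregation map produces $\nu^n$ on $S_n$. Positive recurrence will then follow from the standard criterion that any irreducible Markov chain on a countable state space carrying a stationary probability measure is positive recurrent.

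First I would verify that $\nu^1=(\beta_j)_{j\geq 1}$ is a stationary probability measure for $X^1$. A single ball at position $j>1$ moves deterministically to $j-1$, while a ball at position $1$ is resampled according to $\mu$, so the stationarity equation reads $\pi(j)=\pi(j+1)+\pi(1)\mu(j)$ for $j\geq 1$. The assumption $E_\mu[\eta]<\infty$ ensures $\sum_{j\geq 1}\beta_j=E_\mu[\eta]/E_\mu[\eta]=1$, and using $\beta_1=1/E_\mu[\eta]$ one immediately reads off
$$\beta_{j+1}+\beta_1\mu(j)=\frac{\mu\{j+1,j+2,\ldots\}+\mu(j)}{E_\mu[\eta]}=\frac{\mu\{j,j+1,\ldots\}}{E_\mu[\eta]}=\beta_j.$$
For general $n$ I would label the balls $1,\ldots,n$ and denote by $(Z_1(t),\ldots,Z_n(t))$ the vector of their positions. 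Since the transition of $X^n$ independently resamples each urn-$1$ ball from $\mu$ and deterministically shifts all other balls one step to the left, the labelled tuple evolves as $n$ independent copies of the single-ball chain. Starting from $(Z_k(0))\stackrel{\text{i.i.d.}}{\sim}\nu^1$, the joint law at every time is $(\nu^1)^{\otimes n}$, and the aggregation map $(z_1,\ldots,z_n)\mapsto(\#\{k:z_k=i\})_{i\geq 1}$ pushes this product measure forward to the infinite-parameter multinomial $\mathrm{Mult}(n;\beta_1,\beta_2,\ldots)=\nu^n$, proving stationarity.

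For positive recurrence I would invoke irreducibility of $X^n$ on the support of $\nu^n$, for which the standing assumption $\mu(\{1\})>0$ is essential: starting from any $x\in S_n$, the deterministic shifts bring every ball into urn $1$ in finitely many steps, and at each visit to urn $1$ a resampled ball can be kept in urn $1$ with probability $\mu(1)>0$, so the state $(n,0,0,\ldots)$ is reached with positive probability, from which any configuration $y\in\mathrm{supp}(\nu^n)$ can be reached by prescribing appropriate independent $\mu$-resamplings. Combined with the stationary probability measure $\nu^n$ just constructed, this yields positive recurrence. The main point to watch is the countably infinite state space and the infinite-parameter multinomial: one must verify that the push-forward identity and the sums $\sum_j\beta_j$ and $\sum_j\beta_j\mu(j-k)$ converge and manipulate legitimately, but all of this is secured by the finite-mean assumption $E_\mu[\eta]<\infty$, so the argument remains essentially routine once the independence decomposition is in hand.
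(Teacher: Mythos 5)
Your proof is correct, but it takes a genuinely different route from the paper's. The paper reduces to the finite-support case: it truncates $\mu$ to measures $\mu_j$ supported on $\{1,\dots,j\}$, invokes Lemma~1 of \cite{KKL} to get stationarity of the truncated multinomials $\nu^n_j$ for the truncated chains $Y^{n,j}$, and then passes to the limit $j\to\infty$ in the stationarity equation (the interchange of limit and sum being harmless because only the finitely many $x$ with $j_x\le j_y+1$ contribute to $P_{\nu^n}(X^n(1)=y)$). You instead verify stationarity from scratch: you solve the balance equation $\pi(j)=\pi(j+1)+\pi(1)\mu(j)$ for the single-ball (residual-lifetime) chain, observe that the labelled $n$-ball system is a product of $n$ independent copies of it, and push $(\nu^1)^{\otimes n}$ forward under the counting map to get $\mathrm{Mult}(n;\beta_1,\beta_2,\dots)$. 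This is self-contained (no appeal to \cite{KKL}), avoids the truncation--limit interchange, and makes transparent \emph{why} the multinomial with success probabilities $\beta_i$ is the right answer; the paper's version buys brevity by leaning on the existing finite-support result. Both arguments conclude identically via ``irreducible chain with a stationary probability measure is positive recurrent.'' One small caution on your irreducibility step, where you are in fact more explicit than the paper: the support of $\nu^n$ consists of configurations charging urns $i$ with $\mu(\{i,i+1,\dots\})>0$, which may strictly contain $\{i:\mu(i)>0\}$; to reach such a $y$ from $(n,0,0,\dots)$ you must launch a ball to some urn $j\ge i$ with $\mu(j)>0$ at the right earlier time (holding it in urn $1$ via $\mu(\{1\})>0$ until then) and let it drift down to urn $i$. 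Your phrase ``prescribing appropriate independent $\mu$-resamplings'' covers this in spirit, but it is worth spelling out; the paper itself dismisses the whole issue with ``by irreducibility.''
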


\begin{proof}

We reduce the proof to the finite case discussed in \cite{KKL}. For each $j\in\N$ we define  
$$\mu_j(\{i\}):=\frac{1}{\sum_{l=1}^j\mu(\{l\})}1_{\{i\leq j\}}\mu (\{i\}),\quad i\in\N.$$
This defines a probability measure $\mu_j$ with support $\{1,...,j\}.$
Clearly, $\lim_{j\to\infty}\mu_j(i)=\mu(i)$ for all $i,$ and $\lim_{j\to\infty}\E_{\mu_j}[\eta]=E_\mu[\eta]$ by monotone convergence.

Let $Y^{n,j}=(Y^{n,j}(k))_{k\in\N_0}$ be the Markov chain constructed in the same way as $X^n,$ but with relocation
 measure $\mu_j$ instead of $\mu,$ that is,
$Y^{n,j}(k+1)=\sigma( Y^{n,j}(k))+R^j(Y^{n,j}_1(k)),$
where $R^j(l)\sim  \mathrm{Mult}(l;\mu_j(1),...,\mu_j(j)),$ and with $Y^{n,j}(0)=X^n(0).$ 
Define now
$$\beta^j_i:=\frac{\mu_j\{i,i+1,...\}}{E_{\mu_j}[\eta]}.$$ 
Clearly, $\lim_{j\to\infty}\beta_i^j=\beta_i\ \ \forall i\in\N.$
Let $\nu^n_j:=\mathrm{Mult}(n;\beta^j_1,\beta^j_2,...)$ the multinomial
 distributions
on $S_n$ with success probabilities $\beta_i^j.$ 
By Lemma 1 of \cite{KKL} we know that $\nu_j^n$ is the stationary distribution for $Y^{n,j}$. Fix $x,y\in S_n.$ By construction,
\begin{equation}\begin{split}P(X^n(1)=y\,|\, X^n(0)=x)=&P(R(x_1)=y-\sigma(x))=\lim_{j\to\infty}P(R^j(x_1)=y-\sigma(x))\\      
=&\lim_{j\to\infty}P(Y^{n,j}(1)=y\,|\, Y^{n,j}(0)=x).\end{split}
\end{equation}
For $x\in S_n,$ let $j_x:=\max\{j: x_j\neq 0\}.$ Note $P(X^n(1)=y\,|\,X^n(0)=x)=0$ for all $x$ such that $j_x>j_y+1.$ We write 
$P_{\nu^n}$ for the distribution of $(X^n(k))_{k\in\N}$ with initial distribution $\nu^n.$ Then, for every $y\in S_n,$
\begin{align}
P_{\nu^n}(X^n(1)= y) & = \sum_{x\in S_n, j_x\leq j_y+1}\nu^n(x)P(X^n(1)=y\,|\, X^n(0)=x)\\
&=\lim_{j\to\infty}
\sum_{x\in S_n, j_x\leq j_y+1}\nu^n_j(x)P(Y^n(1)=y\,|\, Y^{n,j}(0)=x)\\&= \lim_{j\to\infty}\nu^n_j(y)=\nu^n(y).
\end{align}
So $\mathrm{Mult}(n;\beta_1,\beta_2,...)$ is a stationary distribution for $X^n$. By irreducibility it is unique, and
 $X^n$ is positive recurrent.
 \end{proof}

Recall the dynamics of the process $X^{N,n}=(X^{N,n}(k))_{k\in\N_0}$ from above. 
We first compute the probability of a coalescence given that we are in a fixed configuration. Define the events
$$B_{l,k}:=\{\mbox{exactly }l \mbox{ mergers at time }k\mbox{ in }X^{N,n}\}$$
and $$B_{\geq l,k}:=\{\mbox{at least }l \mbox{ mergers at time }k\mbox{ in }X^{N,n}\},$$
for $1 \le l \le n$ and $k\in\N.$
\begin{lemma}
 \label{lem:prob_coalescence}Fix $N\in\N,$ $n<N,$ and $\mu$ such that $\E_\mu[\eta]<\infty.$ With the notation of the last section,
\begin{equation}\begin{split}
P\big(B_{1,k+1}\,\big|\,X^{N,n}(k)=(x_1,x_2,...)\big)=\frac{1}{N}\sum_{i=1}^\infty \left(x_1x_{i+1}\mu(i)+
{{x_{1}}\choose{2}}\mu(i)^2\right)+O(N^{-2})\end{split}\end{equation}
and there exists $0< c(n)<\infty,$ depending on $X^{N,n}$ only via $n,$ such that
$$P\big(B_{\geq 2,k+1}\,\big|\,X^{N,n}(k)=(x_1,x_2,...)\big)\leq \frac{c(n)}{N^{2}}.$$
\end{lemma}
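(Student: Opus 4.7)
The plan is to realize any merger at time $k+1$ as a collision of a pair of balls ending up in the same (urn, section) slot, and to extract the leading term and the $O(N^{-2})$ tail by a first-moment / Bonferroni estimate.

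First I would classify the relevant pairs. Call the $x_1$ balls relocated from urn $1$ the \emph{R-balls} and the remaining $n-x_1$ balls \emph{S-balls}; each R-ball independently picks a new urn $\eta\sim\mu$ and a uniform section in $\{1,\dots,N\}$, while each S-ball simply shifts one urn to the left, keeping its section. Since the dynamics preserve the invariant that no two balls occupy the same (urn, section) slot, two S-balls can never collide at time $k+1$: if they land in the same new urn $i$, they shared the old urn $i+1$ and so had distinct sections. Hence potential mergers come only from \emph{RS-pairs} (one R-ball, one S-ball) and \emph{RR-pairs} (two R-balls).

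Next I compute single-pair collision probabilities. An RS-pair in which the S-ball lay in old urn $i+1$ collides iff the R-ball chooses generation $i$ (probability $\mu(i)$) and matches the S-ball's section (probability $1/N$), giving $\mu(i)/N$; summing over the $x_1 x_{i+1}$ such pairs and over $i$ gives the first term. An RR-pair collides in new urn $i$ with probability $\mu(i)^2/N$, summing to $\sum_{i\ge 1}\mu(i)^2/N\le 1/N$; multiplying by $\binom{x_1}{2}$ gives the second term. By Bonferroni,
$$
P(B_{\ge 1,k+1}\mid X^{N,n}(k)) \;=\; \sum_{\text{pairs}} P(\text{pair collides}) \;+\; O(N^{-2}),
$$
provided every pair of distinct collision pairs has joint probability $O(N^{-2})$. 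Since $P(B_{1,k+1}) = P(B_{\ge 1,k+1}) - P(B_{\ge 2,k+1})$ and $P(B_{\ge 2,k+1})$ itself is bounded by the sum of joint collision probabilities, both statements of the lemma reduce to this one estimate.

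The main obstacle is the case analysis for two collision pairs sharing a ball, since those events are not independent. Disjoint pairs contribute $O(N^{-2})$ immediately by independence (each collision has probability $O(1/N)$). If the two pairs share a ball $a$, a joint collision forces three balls $a,b,c$ into a common (urn, section). A short case check on the types of $b,c$ shows that the sub-cases involving two S-balls landing in one slot are forbidden by the invariant (probability $0$), while the remaining sub-cases (RRR, RRS, RSR) require either $\sum_i\mu(i)^3$ or $\mu(i_\ast)^2$ together with two factors $1/N$ from section matches, each giving $O(N^{-2})$. As the number of pairs of pairs is bounded by $\binom{n^2}{2}$, the union bound yields $P(B_{\ge 2,k+1}\mid X^{N,n}(k))\le c(n)/N^2$ for a constant depending only on $n$, completing the plan.
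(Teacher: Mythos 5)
Your proof is correct, and at its core it is the same first--/second--moment argument as the paper's: the leading term is the expected number of colliding pairs, and both the inclusion--exclusion error and $P(B_{\geq 2,k+1})$ are controlled by the sum of joint collision probabilities over pairs of pairs, each of which is $O(N^{-2})$. The bookkeeping differs, though. The paper conditions additionally on the multinomial relocation vector $R(X_1^{N,n}(k))$, computes collision probabilities urn by urn (the quantities $M_i$, the within-urn error $p(i)$, and the cross-urn products $M_iM_j$), and only at the end averages over $R$ using $E[R_i(x_1)]=x_1\mu(i)$ and $E\bigl[\binom{R_i(x_1)}{2}\bigr]=\binom{x_1}{2}\mu(i)^2$; this makes the origin of the two terms $x_1x_{i+1}\mu(i)$ and $\binom{x_1}{2}\mu(i)^2$ transparent as multinomial moments. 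You instead decompose by pairs of balls and integrate out the destination urn immediately, which shortcuts the conditional step and lets you dispose of S--S collisions cleanly via the distinct-slot invariant (something the paper leaves implicit in its definition of $M_i$). One small point of care in your case analysis of two collision pairs sharing a ball: the surviving configurations are ``all three balls relocated'' and ``exactly one of the three is a shifted ball'' (whether or not the shared ball is the shifted one), so the list should also include the case where the shared ball $a$ is the S-ball and $b,c$ are both R-balls; it is handled by exactly the same bound $\mu(i_\ast)^2/N^2$, so nothing breaks, but the enumeration as written reads as if only $b,c$ could be the shifted ball.
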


\begin{proof}
We start with computing
the probability of a coalescence in a fixed urn $i\in\N$ given $X^{N,n}(k)=(X^{N,n}_{1}(k), X^{N,n}_{2}(k),...)$ and $R(X^{N,n}_1(k))=(R_1(X^{N,n}_1(k)), R_2(X^{N,n}_1(k)),...)$.
 The probability for having {\em exactly} one coalescence occurring in urn $i$ 
(note that from $k$ to $k+1$ we shift all urns by 1) is
$$\frac{1}{N}X^{N,n}_{i+1}(k)R_i(X^{N,n}_1(k))+\frac{1}{N}{{R_i(X^{N,n}_1(k))}\choose{2}}-p(i),$$
where $p(i)=p(i,X^{N,n}(k),R(X_1^{n,N}(k)))$ is the probability that more than one coalescence happens in urn $i.$ Here, the first term is the probability that we see at least one coalescence due to one of the
 relocated balls falling into an already occupied section of urn $i$, and the second term
 is the probability of seeing at least one coalescence due to two relocated balls falling into the same section of  urn $i$.
Observe that
$p(i)$ is $O(N^{-2})$. More precisely,
writing 
$$M_i:=X^{N,n}_{i+1}(k)R_i(X^{N,n}_1(k))+{{R_i(X^{N,n}_1(k))}\choose{2}},$$
it is easy to see that, because each ball being moved to urn $i$ has a probability of at most $\frac{n}{N}$ to merge at all,
$$p(i)\leq \frac{n^4}{N^2},$$
and therefore, since \emph{given} $X^{N,n}(k)$ and $R(X_1^{N,n}(k))$ there are at most $n$ occupied urns,
$$\sum_{i=1}^\infty p(i)\leq \frac{n^5}{N^2}.$$
Further, given $X^{N,n}(k)$ and $R(X_1^{N,n}(k)),$ the probability of having at least two mergers at step $k+1,$ which occur in two different
 urns $i$ and $j,$ is 
\begin{equation*}\begin{split}
\frac{1}{N^2} M_i\cdot M_j.\end{split}\end{equation*}
Moreover, for fixed $X^{N,n}(k)$ and $R(X_1^{N,n}(k)),$ we have the trivial bound $\sum_{j=1}^\infty M_j\leq 2n^3.$
This implies 
$$\frac{1}{N^2} \sum_{i=1}^\infty \sum_{j:j\neq i}M_i\cdot M_j\leq \frac{4n^6}{N^2}.$$
Thus the probability of seeing exactly one coalescence in step ${k+1},$ \emph{given} $X^{N,n}(k)$ and $R(X^{N,n}_1(k)),$ is
$$\sum_{i=1}^\infty \left(\frac{1}{N}M_i-p(i)\right)-\frac{1}{N^2}\sum_{{i,j=1}\atop{j\neq i}}^\infty M_iM_j=
 \frac{1}{N}\sum_{i=1}^\infty M_i+O(N^{-2}).$$
Computing $R(X^{N,n}_1(k))$ given $X^{N,n}(k)$ using the multinomial distribution, we obtain
\begin{equation}\begin{split}
P\big(B_{1,k+1}\,\big|\,X^{N,n}(k)=x\big)=&\sum_{r\in S_n}P(B_{1,k+1}\,|\,X^{N,n}(k)=x, R(x)=r)P(R(x)=r\,|\,X^{N,n}(k)=x)
\\
=&\frac{1}{N}\sum_{r\in S_n}\left[\sum_{i=1}^\infty \left(x_{i+1}r_i+
 {{r_i}\choose{2}}\right)+O(N^{-2})\right]P(R(x)=r\,|\,X^{N,n}(k)=x)\\
= &\frac{1}{N}\sum_{i=1}^\infty \left( x_{i+1}x_{1} \mu(i)+
{{x_{1}}\choose{2}}\mu(i)^2\right)+O(N^{-2}),\end{split}\end{equation}
where we have used that 
$$\sum_{r\in S_n}O(N^{-2})P(R(X_1^{N,n})=r\,|\,X^{N,n}(k)=x)=O(N^{-2})$$ since the $O(N^{-2})$ term 
is bounded uniformly in $r\in S_n$ by some $\frac{c(n)}{N^2},$ and we average with respect to a probability measure.
This proves the first claim. We have seen that 
\begin{equation*}P\big(B_{\geq 2,k+1}\,\big|\,X^{N,n}(k), R(X^{N,n}_1(k)\big)=\sum_{i=1}^\infty p(i)+  
 \frac{1}{N^2}\sum_{{i,j=1}\atop{j\neq i}}^\infty M_i\cdot M_j\leq \frac{c(n)}{N^2}.\end{equation*}
This proves the second part.

\end{proof}

%

We now have the ingredients to prove convergence to Kingman's coalescent.\\
%

{\bf Proof of Theorem \ref{thm:convergence_to_kingman}.} Fix $n\in \N.$ We will first study the process started in
 the stationary distribution $\nu^.$ Then we will extend the 
result to arbitrary initial distributions using an adaptation of Doeblin's coupling method.
To prove convergence in the stationary case, we just need to prove that the inter-coalescence times for binary mergers are distributed 
asymptotically exponential with rate $\beta_1^2{{n}\choose{2}},$ and that multiple coalescences are negligible. Starting from the stationary distribution, the probability of seeing a coalescence in the 
next step given that we have currently $n$ balls is \noemi{obtained as in \cite{KKL}, using Lemma \ref{lem:prob_coalescence}:}

\begin{equation}\begin{split}
P\big(B_{1,k+1}\,\big|\, X^{N,n}(k)\sim \nu^n\big) 
&=\E_{\nu^n}\big[P\big(B_{1,k+1}\,\big|\, X^{N,n}(k)\big)\big]\\
&=\frac{\beta^2_1}{N}{{n}\choose{2}}\left(2\sum_{i=1}^{\infty}
\frac{\beta_{i+1}}{\beta_1}\mu(i)+\sum_{i=1}^\infty \mu(i)^2\right)+O(N^{-2})\\
 &=\frac{\beta^2_1}{N}{{n}\choose{2}}+O(N^{-2}),
\end{split}\end{equation}

where we have computed the expectations with respect to the multinomial distribution $\nu^n$ and used  $2\sum_{i=1}^{\infty}
\frac{\beta_{i+1}}{\beta_1}\mu(i)+\sum_{i=1}^\infty \mu(i)^2=1.$

We have seen before that multiple coalescences happen with
negligible probability.
 Hence if we speed up time by a factor $N,$ we obtain for the inter-coalescence times
\begin{equation}
\label{convergence_times}
 \begin{split}
  \lim_{N\to\infty}P(\mbox{no coalescence in } X^{N,n} \mbox{ before time }Nt)=\lim_{N\to\infty}\left(1-\frac{\beta^2_1}{N}{{n}\choose{2}}+O(N^{-2})\right)^{Nt}
=e^{-\beta^2_1{{n}\choose{2}}t}.
 \end{split}
\end{equation}
For the coupling argument, we consider now a process $\tilde{X}^{N,n}$ which runs as follows: Start with $n$ balls in the stationary
 distribution $\nu^n,$ and let it evolve
according to the $n-$sample dynamics. After each coalescence event, sample a new starting configuration according to $\nu^{n'},$ 
where $n'$ is the number of balls present after the coalescence, and run the process according to the $n'-$sample dynamics.
Assume now that $X^{N,n}$ starts in a given initial distribution.
Define
$$T^{(N)}:=\inf\{t>0: X^{N,n}(t)=\tilde{X}^{N,n}(t)\}.$$
We couple $X^{N,n}$ and $\tilde{X}^{N,n}$ as follows. Colour the balls of $X^{N,n}$ red and the balls of $\tilde{X}^{N,n}$ blue.
Label both the red and the blue balls $1,...,n.$ Recall that the dynamics of our urn process just consists in moving balls from 
urn one independently from each other to a new urn according to $\mu,$ and merging balls in the same urn with probability $\frac{1}{N}$ per pair.
Run the red and the blue process independently.
Let us first assume that no coalescences occur in either of the processes. \\
Now if at some time $k,$ the red ball number $i$ and the blue ball number $i$ happen to be 
in the same urn (but not necessarily in the same section), we couple them and let them move together from this time onwards. 
Denote by $\sigma_i$ the time of this
coupling. Note that $\sigma_i$ is finite almost surely, since it is the coupling time of two renewal processes .
Then we continue running our processes until all the balls have coupled. Let $T_{coup}:=\max\{\sigma_i, 1\leq i\leq n\}.$ Note 
that this time is independent of $N.$
Since $n$ is fixed, and the different balls move independently, we have $P(T_{coup}<\infty)=1$ no matter which initial distributions
we choose (see
\cite{Lindvall_Book}, chapter II), and hence
$$\lim_{t\to\infty}P(T_{coup}\geq t)=0.$$
Speeding up time by $N,$ the coupling happens much faster than the coalescence: Let $T^{(N)}_{coal}$ be the time of the 
first coalescence in either the red or the blue process. At each time step, the probability of having a coalescence in the next step
is bounded from above by the crude uniform estimate $n^2/N.$ Hence 
$$\lim_{N\to\infty}P\big(T^{(N)}_{coal}\geq  \sqrt{N}\big)\geq \lim_{N\to\infty}\left(1-\frac{n^2}{N}\right)^{\sqrt{N}}=1.$$
Since
$$\lim_{N\to\infty}P\big(T_{coup}\leq \sqrt{N}\big)=1,$$
we get
$$\lim_{N\to\infty}P\big(T^{(N)}_{coal}\geq T_{coup}\big)\geq \lim_{N\to\infty}P\big(T^{(N)}_{coal}\geq \sqrt{N},T_{coup}\leq \sqrt{N}\big)=1.$$
This implies
$$\lim_{N\to\infty}P\big(T^{(N)}\neq T_{coup}\big)=\lim_{N\to\infty}P\big(T^{(N)}_{coal}<T_{coup}\big)=0,$$
from which we see 
$$ \lim_{N\to\infty}P\big(T^{(N)}\geq Nt\big)=\lim_{N\to\infty} P\big(T_{coup}\geq Nt\big)=0.$$
Hence we can restart our process $\tilde{X}^{N,n}$ after each coalescence event, and the two processes will couple with probability 1 before the next coalescence 
takes place, and indeed on the coalescent time scale (time sped up by $N$) the coupling happens instantaneously.
Using (\ref{convergence_times}) we thus obtain for the inter-coalescence times of the process started in an arbitrary but fixed initial configuration
\begin{equation}
 \begin{split}
\lim_{N\to\infty}P(\mbox{no coalescence in } X^{N,n} \mbox{ before }Nt)
=&e^{-\beta^2_1{{n}\choose{2}}t}.
\end{split}
\end{equation}
This implies as before by standard arguments that $|X^{N,n}(Nt)|$ converges weakly as $N\to\infty$ to the block-counting process of 
Kingman's coalescent. Since $|X^{N,n}(Nt)|\stackrel{d}{=}|A_{N,n}(Nt)|,$ and the fact we obviously have exchangeability of the 
ball configurations,
we even obtain the convergence to Kingman's $n-$coalescent in the obvious sense.\hfill $\Box$

\begin{remark}
It appears remarkable that $\E_{\mu}[\eta]<\infty$ is sufficient for this result.
 If $\E_{\mu}[\eta^2]=\infty,$ and $Y$ denotes the label of the urn that a ball is placed in, then $\E_{\nu^n}[Y]=\infty$
 and by \cite{Lindvall}, $\E[S]=\infty.$ However, due to the
time rescaling, the fact that $P(S<\infty)=1$ is enough for our purpose.
\end{remark}

\begin{remark}\label{proof_moehle}
 In order to show convergence to Kingman's coalescent, we could also
 follow the approach of \cite{KKL}, which uses M\"ohle's Lemma \cite{Moehle} to show convergence of finite dimensional distributions. 
Note however that in our case the state space of the Markov chain is infinite, hence the transition matrices are infinite.
Indeed, denoting the transition matrix of $X^{N,n}$ by 
$\Pi_N=\{\Pi_N(x,y)\}_{x,y\in\cup_{j=1}^\infty S_j},$ we can decompose $\Pi_N$ as 
$\Pi_N=A+\frac{1}{N}B+O(N^{-2}),$
where $A$ is given by the transitions of the $X^n-$processes without coalescence, and $B$ contains adjustments that need to 
be made 
to the $X^n-$process in case of a single coalescence event (compare \cite{KKL}). The higer order coalescences are $O(N^{-2})$ by
Lemma \ref{lem:prob_coalescence}. To apply M\"ohle's Lemma 
it is sufficient to show that $P:=\lim_{m\to\infty}A^m$ 
and $G:=PBP$ exist. 
We first take care of the part without coalescence. Let $A$ be defined by
$A(x,y):=\sum_{j=1}^n1_{\{x,y\in S_n\}}A_n(x,y),$
where $(A_n(x,y))_{(x,y)\in S^n}$ denotes the transition matrix of $X^n.$ Then Lemma \ref{lem:stationary_measure} yields
$\lim_{k\to\infty}A_n^k(x,y)=\nu^n(y)$ for all $x,y\in S_n.$
Therefore we obtain 
$\lim_{m\to\infty}A^m=P,$
where $P=(P(x,y))_{x,y\in S}$ with $P(x,y)=\sum_{j=1}^n1_{\{x,y\in S_j\}}\nu^j(y).$
We can now define $B$ as the matrix of the single coalescence events as in \cite{KKL}. That is, if
 $x\in S_i,y\in S_{i-1},$ then
$B(x,y)$ is the probability that the balls from configuration $x$ are relocated according to the matrix $A_i,$ and that exactly one pair of them
 coalesces, so that we end up with configuration $y.$ If $x\in S_i,$ then $B(x,y)=0$ if $y\notin S_i\cup S_{i-1}.$ If $x$ and 
$y$ are in $S_i,$ then $B(x,y)$ gives the correction for the $X^n-$process in case of a coalescence, therefore
 $B(x,y)\geq -A(x,y)$ in this case. Hence $B$ has the same block form as in \cite{KKL}, however, the single blocks are 
of infinite size. Furthermore,
$\|B\|=\max_{x\in \cup_{i=1}^nS_i}\sum_{y}|B(x,y)|\leq 2.$ Since $P$ is a projection, $G=PBP$ is a bounded operator, 
and therefore $e^{tG}, t\in\R,$ exists as a convergent series. Now the computations work in exactly as in the case of 
bounded support, hence we obtain the convergence to Kingman's coalescent following the proof of \cite{KKL}.\hfill 
$\Box$
\end{remark}

\begin{remark} Note that M\"ohle's result allows the following heuristic interpretation of our limiting process $X^{N, n}$ as 
$N\to\infty.$
 First, the process, for each number of `active' balls $n' \le n$, mixes rapidly and essentially instantaneously enters its stationary distribution
on the configuration with $n'$ balls. Note that as long as there is no coalescence event, any future evolution does not 
affect the block counting process $A_{N,n}$, and also not the corresponding partition-valued process, where each `active' ball denotes a block
in a partition of $\{1, \dots n\}$ consisting of all labels of balls that have merged into this active ball.
Now, in each `infinitesimal time step', our limiting process picks an entirely new state from its stationary distribution, independent of its `previous' state (this is the effect of the projection operator $P$).
In a way it can be regarded as a `white noise' process on the space of stationary samples.
While this process obviously has no c\`adl\`ag modification, both the block counting process, and the partition valued process, remain constant until there is a new merger, and are thus well-defined (recalling that such
mergers, that is,  transitions from $n'$ active balls to $n'-1$ active balls, happen at finite positive rate in the limit).
\end{remark}

\subsection{Proof of Theorem \ref{thm:tmrca}}
Recall from section 2
 that the time to the most recent common ancestor is related to the
coupling time of two versions of the renewal process. Recall 
$$q_n=P_N^\mu\Big(A(v)\cap\big(\{-n\}\times\{1,...,N\}\big)\neq \emptyset\Big).$$
We will need some bounds on the $q_n$ that can be obtained via Tauberian
theorems. 

\begin{lemma}
 \label{lem:bounds}
Let $\mu\in\Gamma_\alpha.$ 
\begin{itemize}
 \item[(a)] Let $\alpha\in(0,1).$ Then
$$\sum_{n=0}^{i} q_n\sim\frac{1-\alpha}{\Gamma(2-\alpha)\Gamma(1+\alpha)}\cdot i^\alpha L(i)^{-1}\mbox{ as }i\to\infty,$$
\item[(b)] The sum
$$\sum_{n=0}^\infty q_n^2$$
is finite if $\alpha\in(0,1/2)$ and infinite if $\alpha>1/2.$
\item[(c)] Let $\alpha\in(0,1/2).$ Then
$$\sum_{n=0}^\infty q_nq_{n-i}\sim\frac{(1-\alpha)^2}{\Gamma(2-\alpha)^2\Gamma(2\alpha)}\cdot i^{2\alpha-1}L(i)\mbox{ as }i\to\infty.$$
\end{itemize}
\end{lemma}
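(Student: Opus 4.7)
The plan is to reduce all three claims to applications of Karamata's Tauberian theorem for power series (see e.g.\ \cite{BGT}, Section~1.7), applied to the renewal generating function $U(s):=\sum_{n\geq 0}q_n s^n=1/(1-\hat\mu(s))$, where $\hat\mu(s):=\sum_{n\geq 1}\mu(\{n\})s^n$. Since $\mu\in\Gamma_\alpha$, the tail $\mu(\{n,n+1,\ldots\})=n^{-\alpha}L(n)$ is regularly varying of index $-\alpha$, and the corresponding Abelian theorem (Feller, Vol.~II, XIII.5; or \cite{BGT}) yields
\[
1-\hat\mu(s)\sim\Gamma(1-\alpha)(1-s)^\alpha L\bigl(1/(1-s)\bigr)\quad\text{as }s\uparrow 1,\ 0<\alpha<1,
\]
so $U(s)\sim(1-s)^{-\alpha}/[\Gamma(1-\alpha)L(1/(1-s))]$ is regularly varying of index $-\alpha$ in $1-s$. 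Part~(a) is then immediate from Karamata's Tauberian theorem applied to the monotone partial sums $\sum_{k\leq n}q_k$; the identity $\Gamma(2-\alpha)=(1-\alpha)\Gamma(1-\alpha)$ rewrites the constant in the stated form.

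For part~(b) the two regimes need separate arguments. When $\alpha\in(1/2,1)$, Cauchy--Schwarz combined with (a) gives
\[
\sum_{n=0}^{N}q_n^2\;\geq\;\frac{1}{N+1}\Big(\sum_{n=0}^{N}q_n\Big)^{\!2}\;\sim\;c\,N^{2\alpha-1}L(N)^{-2}\;\longrightarrow\;\infty,
\]
and for $\alpha>1$ divergence is trivial since the elementary renewal theorem gives $q_n\to 1/\mathbb{E}_\mu[\eta]>0$. For $\alpha\in(0,1/2)$ I would invoke the strong renewal theorem for infinite-mean distributions (Garsia--Lamperti, with later refinements by Erickson, Williamson, Doney, Caravenna--Doney), which under the regular-variation hypothesis on $\mu$ yields the pointwise asymptotic
\[
q_n\;\sim\;\frac{\sin(\pi\alpha)}{\pi}\cdot\frac{n^{\alpha-1}}{L(n)}\qquad\text{as }n\to\infty.
\]
Since $2\alpha-2<-1$, the resulting estimate $q_n^2=\Theta(n^{2\alpha-2}L(n)^{-2})$ makes $\sum q_n^2<\infty$ by direct comparison.

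Part~(c) again uses the pointwise asymptotic above, now for the convolution-type sum $\sum_{n\geq 0}q_n q_{n+i}$ (equal to $\sum_{n\geq i}q_n q_{n-i}$ after reindexing). Splitting into the ranges $n\leq\varepsilon i$, $\varepsilon i\leq n\leq\varepsilon^{-1}i$ and $n\geq\varepsilon^{-1}i$, letting $\varepsilon\downarrow 0$, and controlling the slowly varying factors uniformly in these windows via Potter's bounds \cite{BGT}, one recognises the sum as a Riemann-sum approximation of
\[
\int_0^\infty x^{\alpha-1}(x+i)^{\alpha-1}\,dx\;=\;i^{2\alpha-1}\,B(\alpha,1-2\alpha),
\]
with $B(\alpha,1-2\alpha)=\Gamma(\alpha)\Gamma(1-2\alpha)/\Gamma(1-\alpha)$, and note that this integral converges exactly because $\alpha<1/2$. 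Collecting the constants together with the slowly varying prefactor produces the claimed $i^{2\alpha-1}$-asymptotic.

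The main obstacle is the strong renewal theorem in the regime $\alpha\in(0,1/2)$: it was established in increasing generality by Erickson, Williamson, Doney and Caravenna--Doney, and historically required auxiliary smoothness conditions on the tail of $\mu$ beyond pure regular variation; one must check that $\mu\in\Gamma_\alpha$ falls within the scope of a sharp enough version. A secondary delicate point is the uniform-in-$i$ bookkeeping of the slowly varying factor in part~(c), necessary in order to replace the discrete sum by the Beta integral cleanly; Potter's bounds are the standard tool for this.
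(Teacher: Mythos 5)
The paper itself does not prove this lemma: it is quoted from Lemma~5.1 of \cite{HammondSheffield}, whose argument lives entirely on the generating function $U(s)=\sum_n q_ns^n=(1-\hat\mu(s))^{-1}$ extended to the unit circle --- Karamata for part (a), and Parseval-type identities for (b) and (c), namely $\sum_n q_n^2=\frac{1}{2\pi}\int_{-\pi}^{\pi}|U(e^{i\theta})|^2\,d\theta$ and $\sum_n q_nq_{n-i}$ equal to the $i$-th Fourier coefficient of $|U(e^{i\theta})|^2$, with everything driven by the singularity $|U(e^{i\theta})|^{2}\asymp |\theta|^{-2\alpha}L(1/|\theta|)^{-2}$ at $\theta=0$. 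Your part (a) is exactly the standard Abelian--Tauberian argument and is correct, and your proof of divergence in (b) via Cauchy--Schwarz against (a) for $\alpha\in(1/2,1)$, plus the Erd\H{o}s--Feller--Pollard theorem for $\alpha>1$ (aperiodicity holds since the paper assumes $\mu(\{1\})>0$), is a clean elementary alternative for that half of (b).

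The genuine gap is your use of the strong renewal theorem in the regime $\alpha\in(0,1/2)$, which is precisely where it \emph{fails} under regular variation alone. The class $\Gamma_\alpha$ is defined by nothing more than $\mu(\{n,n+1,\dots\})=n^{-\alpha}L(n)$, and for $\alpha\le 1/2$ there are measures of exactly this form (Williamson's and Doney's examples, e.g.\ tails of the right order but with mass concentrated on a sparse set) for which $\limsup_n n^{1-\alpha}L(n)\,q_n=\infty$; the necessary and sufficient extra condition was only identified by Caravenna and Doney. So the check you defer at the end --- ``one must check that $\mu\in\Gamma_\alpha$ falls within the scope'' --- does not succeed, and the pointwise asymptotic $q_n\sim\frac{\sin(\pi\alpha)}{\pi}n^{\alpha-1}L(n)^{-1}$ is simply unavailable as an input. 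This invalidates your proof of finiteness in (b) for $\alpha<1/2$ and all of (c). Both statements are nevertheless true under the stated hypotheses because they are \emph{averaged} quantities: the remedy is to replace pointwise asymptotics of $q_n$ by the Fourier argument sketched above, using the lower bound $|1-\hat\mu(e^{i\theta})|\ge c|\theta|^{\alpha}L(1/|\theta|)$ near $\theta=0$ (which does follow from regular variation) to get $U\in L^2$ of the circle when $2\alpha<1$, and then reading off the decay of the Fourier coefficients of $|U|^2$ from its regularly varying singularity.

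A secondary point: even granting the strong renewal theorem, your part (c) does not ``produce the claimed asymptotic'' as asserted. Carrying out your own computation gives the constant
\[
\Bigl(\tfrac{\sin\pi\alpha}{\pi}\Bigr)^{2}B(\alpha,1-2\alpha)
=\frac{\Gamma(1-2\alpha)}{\Gamma(\alpha)\Gamma(1-\alpha)^{3}}
=\frac{1}{2\cos(\pi\alpha)}\cdot\frac{1}{\Gamma(1-\alpha)^{2}\Gamma(2\alpha)},
\]
which differs from the displayed constant $(1-\alpha)^{2}\Gamma(2-\alpha)^{-2}\Gamma(2\alpha)^{-1}=\Gamma(1-\alpha)^{-2}\Gamma(2\alpha)^{-1}$ by the factor $1/(2\cos\pi\alpha)$, and the slowly varying factor comes out as $L(i)^{-2}$ rather than the $L(i)$ printed in the lemma (consistently with part (a), where the answer carries $L(i)^{-1}$). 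At least one of the two expressions is wrong, and a complete write-up would have to resolve this rather than assert agreement; as it stands the final sentence of your part (c) papers over a discrepancy.
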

{\bf Proof.} The proof of this lemma can be found in \cite{HammondSheffield}, Lemma 5.1.\hfill $\Box$\\

{\bf Proof of Theorem \ref{thm:tmrca}.} We first prove $(c),$ which corresponds to the case where we have convergence 
to Kingman's coalescent. Without loss of generality, assume $i_v=i_w=0.$ Denote by $(R_n)$ and
$(R'_n)$ the sequences of renewal times of the renewal processes corresponding to $v$ and $w$ respectively,\noemi{ that is, $R_n=\mathbf{1}_{\{n\in \{S_0,S_1,...\}\}}.$} In other words,
$R_n=1$ if and only if $v$ has an ancestor in generation $-n,$ \noemi{and $q_n=P(R_n=1).$} Let
$$T:=\inf\{n: R_n=R_n'=1\}$$
denote the coupling time of the two renewal processes. Since each time $v$ and $w$ have an ancestor in the same generation, these
ancestors are the same with probability $N,$ we get
$$E[\tau]=NE[T].$$
But if $\alpha>1,$ we have that $E_\mu[\eta_1]<\infty,$ and therefore by Proposition 2 of \cite{Lindvall}, $E[T]<\infty.$ 
The result now follows from Theorem \ref{thm:convergence_to_kingman} and the fact that the
expecte time to
the most recent common ancestor of $n$ individuals in Kingman's coalescent \noemi{with time change $\beta^2$} is given by
$$E[T_{MRCA}]=\frac{1}{\beta^2}\sum_{k=2}^n\frac{1}{{k \choose 2}}=\frac{2}{\beta^2}\left(1-\frac{1}{n}\right),$$
\noemi{hence for $n=2$ we get $\frac{1}{\beta^2}.$}

$(b)$ For independent 
samples $R$ and $R',$ 
the expected number of generations where both individuals have an ancestor, is given by
$$E\big[\sum_{n=0}^\infty R_nR'_n\big]=\sum_{n=0}^\infty E[R_n]E[R'_n]=\sum_{n=0}^\infty q_n^2,$$
which is infinite if $\alpha>1/2$ due to Lemma \ref{lem:bounds} $(b)$. Each of these times, the ancestors are the same with
probability $1/N,$ therefore with probability one $A(v)$ and $A(w)$ eventually meet. However, the expected time until this event
is bounded from below by the expectation of the step size,
$$E_\mu^N[\tau]\geq E[\eta]=\infty$$
if $\alpha<1.$\\
$(a)$ In this case, $E\big[\sum_{n=0}^\infty R_nR'_n\big]=\sum_{n=0}^\infty q_n^2<\infty,$ and therefore 
$$P\big(\sum_{n=0}^\infty R_nR'_n=\infty\big)=0,$$ which implies that the probability that $A(v)$ and $A(w)$ never meet is positive.

\subsection{Proof of Theorem \ref{cor:correlations}}
We prove  now Theorem \ref{cor:correlations}. We define $Y_v:=1_{\{X_v=a\}}. $
\begin{lemma}
\label{lem:cov} Let $\lambda=\lambda_N^p,$ \noemi{and assume $\mu\in \Gamma_\alpha.$}
\begin{itemize}
 \item[(a)] If $\alpha>1/2,$
$$\cov_\lambda(Y_v,Y_w)=p(1-p),$$
\item[(b)]If $\alpha\in (0,1/2), v\neq w,$
$$\cov_\lambda(Y_v,Y_w)=p(1-p)\frac{\sum_{n=0}^\infty q_nq_{n+i_v-i_w}}{N+\sum_{n=1}^\infty q_n^2}.$$
\end{itemize}
\end{lemma}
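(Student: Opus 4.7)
The starting point is a coloring reduction: since the colouring measure $\lambda_N^p$ assigns types independently across connected components of $G$, one has $Y_v = Y_w$ whenever $v \sim w$ (i.e.\ $A(v)\cap A(w)\neq\emptyset$) and $Y_v, Y_w$ are independent Bernoulli$(p)$ otherwise. Conditioning on the event $\{v\sim w\}$ gives $E_\lambda[Y_vY_w]=p\cdot P_N^\mu(v\sim w)+p^2(1-P_N^\mu(v\sim w))$, and hence
\[
\cov_\lambda(Y_v,Y_w)=p(1-p)\,P_N^\mu(v\sim w),
\]
reducing both parts of the lemma to computing $P_N^\mu(v\sim w)$. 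Part (a) is now immediate from Theorem \ref{thm:tmrca}: for $\alpha>1/2$ any two ancestral lineages meet almost surely, so $P_N^\mu(v\sim w)=1$ and $\cov_\lambda(Y_v,Y_w)=p(1-p)$.

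For (b) my plan is a two-moment renewal argument. Up to the first time $A(v)$ and $A(w)$ visit a common individual, the two lines may be coupled to a pair of \emph{independent} labelled renewal processes, each with interarrival law $\mu$ and independent Uniform$\{1,\dots,N\}$ labels along its trajectory. Let $C$ denote the number of ``label coincidences'' -- pairs (generation, label) at which both processes have a renewal with matching label -- in this fully independent continuation. Since the actual dynamics merges the two lines at the first such coincidence, $\{v\sim w\}=\{C\ge 1\}$. By independence of the two labelled renewal processes and the definition of $q_n$,
\[
E[C]=\frac{1}{N}\sum_{n\ge 0}q_n\, q_{n+(i_v-i_w)},
\]
which is finite by Lemma \ref{lem:bounds}(b) under the hypothesis $\alpha<1/2$. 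This is, up to the factor $1/N$, the numerator of the formula.

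Next I would apply the strong Markov property of the pair of renewal processes at the first label coincidence: conditional on $C\ge 1$, both processes restart from a common site, after which each subsequent lag $k\ge 1$ contributes an additional coincidence with probability $q_k^2/N$. Summing gives
\[
E[C\mid C\ge 1]=1+\frac{1}{N}\sum_{k\ge 1}q_k^2=\frac{N+\sum_{k\ge 1}q_k^2}{N},
\]
and dividing yields $P_N^\mu(v\sim w)=E[C]/E[C\mid C\ge 1]$, which is exactly the ratio in the statement. The step I expect to require the most care is precisely this strong Markov / renewal restart: one must realise the two ancestral lines as a genuine product of independent labelled renewal processes until first coincidence and then justify that the pair re-starts with the law of two fresh renewals from a common vertex. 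Aperiodicity (guaranteed by $\mu(\{1\})>0$) and the finiteness of $\sum_{k\ge 1}q_k^2$ from Lemma \ref{lem:bounds}(b) are what make the restart argument clean and the resulting expression meaningful.
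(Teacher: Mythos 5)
Your proposal is correct and follows essentially the same route as the paper: the reduction $\cov_\lambda(Y_v,Y_w)=p(1-p)P_N^\mu(A(v)\cap A(w)\neq\emptyset)$, part (a) from Theorem \ref{thm:tmrca}, and for part (b) the identity $P(C\ge 1)=E[C]/E[C\mid C\ge 1]$ is exactly the paper's computation of the expected number of intersections of two independent (uncoupled) ancestral lines in two ways, with $E[C\mid C\ge 1]=q_0+\frac{1}{N}\sum_{n\ge 1}q_n^2$ obtained by the same renewal restart at the first coincidence.
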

{\bf Proof.} We have
$$E_\lambda^N(Y_vY_w)=\lambda(X_v=X_w=a)=pP_N^\mu\big(A(v)\cap A(w))\neq \emptyset\big)+p^2\big(1-P_N^\mu\big(A(v)\cap A(w)\neq\emptyset\big)\big)$$
and $E_\lambda^N(Y_v)E_\lambda^N(Y_w)=p^2.$ This implies

$$\cov_\lambda(Y_v,Y_w)=p(1-p)P^N_\mu\big(A(v)\cap A(w)\neq \emptyset\big).$$
If $\alpha>1/2,$ then $P_N^\mu(A(v) \cap A(w)\neq \emptyset)=1$ which proves $(a).$ 
Hence we need to compute $P_N^\mu(A(v) \cap A(w)\neq \emptyset)$ for $\alpha<1/2.$ 
To do this, let $S_n,S'_n$ denote two independent samples of the renewal process, with $S_0=i_v, S'_0=i_w.$ Note that this 
implies for the times of the renewals that 
$$P(R_n=1)=q_{n+i_v}.$$
Recall that the renewal process is running forward in time, whence the ancestral lines are traced backwards.
Let $A_v$ and $A_w$ denote two independent samples of the ancestral lines of $v$ and $w,$ using the processes $S$ and $S'$ 
respectively, without coupling the processes. Then the expected number of intersections of $A_v$ and $A_w$ is given by
\begin{equation}
 \begin{split}
 E[|A_v\cap A_w|]=\frac{1}{N} E\big[\sum_{n=-i_w}^\infty R_nR'_n\big]=&\frac{1}{N}\sum_{n=-i_w}^\infty q_{n+i_v}q_{n+i_w}\\
=&\frac{1}{N}\sum_{n=0}^\infty q_{n}q_{n+i_v-i_w},
 \end{split}
\end{equation}
On the other hand, conditioning on the event that the ancestral lines meet (which clearly has positive probability), 
and then restart the renewal processes in the 
generation of the first common ancestor, which is the same as sampling two ancestral lines starting at $(0,0),$
\begin{equation*}\begin{split}
  E[|A_v\cap A_w|]= &E\big[|A_v\cap A_w|\,\big| \,A_V\cap A_w\neq \emptyset\big]
P\big(A_v\cap A_w\neq \emptyset\big)\\
=&P(A(v)\cap A(w)\neq \emptyset)E\big[|A_{(0,0)}\cap A_{(0,0)}|]\\
=&P(A(v)\cap A(w)\neq \emptyset)\left(q_0+\frac{1}{N}\sum_{n=1}^\infty q_n^2\right).                  
         \end{split}
\end{equation*}
Recalling $q_0=1$ this implies
$$P_N^\mu(A(v) \cap A(w)\neq \emptyset)=\frac{\sum_{n=0}^\infty q_nq_{n+i_v-i_w}}{N+\sum_{n=1}^\infty q_n^2},$$
which proves the Lemma. 
\hfill $\Box$\\

\noindent
{\bf Proof of Theorem \ref{cor:correlations}.} $(a)$ is obvious and $(b)$ follows from Lemma \ref{lem:cov}.
For $(c),$ let $\alpha\in (0,1/2).$ Lemma \ref{lem:bounds} tells us that
 $\sum_{n=0}^\infty q_n^2<\infty.$ From Lemma \ref{lem:cov} it follows that for $i\neq 0,$
$$\cov_\lambda(Y_N(0),Y_N(i))=p(1-p)\frac{\sum_{n=0}^\infty q_nq_{n-i}}{N+\sum_{n=1}^\infty q_n^2}.$$
For the variance we obtain
\begin{equation*}\begin{split}
\var_\lambda(Y_N(i))=&\frac{1}{N^2}\sum_{k,j=1}^N\cov_\lambda(Y_{(i,k)},Y_{(i,j)})\\
=& \frac{1}{N^2}\left(N p(1-p)+N(N-1)p(1-p)\frac{\sum_{n=0}^\infty q_n^2}{N+\sum_{n=1}^\infty q_n^2}\right)\\
=&p(1-p)\frac{\sum_{n=0}^\infty q_n^2+1-1/N}{N+\sum_{n=1}^\infty q_n^2}.
\end{split}\end{equation*}
Hence
$$\mathrm{corr}_\lambda(Y_N(0),Y_N(i))=\frac{\sum_{n=0}^\infty q_nq_{n-i}}{\sum_{n=0}^\infty q_n^2+1-1/N}$$
which converges as $N\to\infty.$
The result now follows from Lemma \ref{lem:bounds} $(c).$
 \hfill $\Box$\\

\begin{appendix}

\section{Appendix: Gibbs measure characterization of the forward process}
\noemi{In section \ref{sect:forward} we claimed that the measures $\lambda_N^p$ are in a certain sense the only measures describing the type distribution which are consistent with the dynamics of our process. In order to make this rigorous, we use a Gibbs measure characterization, which relies on the approach of \cite{HammondSheffield}. }
In order to construct the Gibbs measure, we start with prescribing the distribution of types  conditional on the 
(infinite)
past. Let $S_N:=\{a,A\}^N$ denote the finite dimensional state space. Let $X_v=X_{(i_v,k_v)}\in \{a,A\}$ denote the type of 
individual $v$ that is the $k$th individual of generation $i.$ We denote by ${\mathcal C}$ the sigma-algebra of cylinder events,
and write $\sigma_{n}$ for the $\sigma-$algebra generated by cylinder sets contained in $\{....,n\}.$
For $i\in\Z,$ we define the probability kernel $\lambda_{N,i}(\cdot|\cdot)$ from $(S_N^\Z,\sigma_{i})$ to $(S_N^\Z, {\mathcal C})$
 by saying that for any finite set $B\subset \{i+1,...\}^N,$ and 
$x_B\in\{a,A\}^B,$ and for $\xi\in S_N^{\{...,i-1,i\}}$ the conditional probability
$$\lambda_{N,i}^\xi(X\arrowvert_B=x_B):=\lambda_{N,i}(\{X\arrowvert_{B}=x_B\}\;|\;\xi)$$
is obtained by first sampling $G\in {\mathcal T}_N,$ 
tracing back the ancestral line of every $v\in B$ until
it first hits $\{...,i\},$ and then assigning the type $\xi_{\cdot}$ of this ancestor to $v.$ This is well defined because 
under $P_N^\mu$ the tree until it first hits $\{...,i\}$ is independent of $\sigma_{i}.$
These kernels $\lambda_{N,i}^\xi, i\in\Z$ are now used to construct the
 Gibbs measures. Due to the construction via product measures it is clear that they are consistent: If $i<j,$ then for
 $B\subset\{j+1,...\}\times \{1,...,N\},$
$$\lambda_{N,i}^{\xi^1}(X_{v}=x_v, v\in B\,\mid\, X_{w}=\xi_w^2, i+1\leq i_w\leq j)=\lambda_{N,j}^{\xi^1\vee\xi^2}
(X_v=x_v, v\in B).$$
Here, $\xi^1\vee\xi^2$ denotes the configuration which is equal to $\xi^1$ on $\{...,i\}$ and equal to $\xi^2$ on $\{i+1,...,j\}.$
So we can now define the Gibbs measures for our model:

\begin{definition}\label{def:mu-gibbs}
A probability measure $\lambda_N$ on $S_N^\Z$ is called a {\bf $\mu-$Gibbs measure} if for all 
 $i\in\Z,$ for all finite subsets $B\subset \{i+1,...,\}\times\{1,...,N\},$ and for all $x_B\in \{a,A\}^B$ the 
mapping $\xi\mapsto \lambda_{N,i}^\xi(x_B)$ is a version of the conditional probability 
$$\lambda_N(X\arrowvert_{B}=x_B\;\mid\;\sigma_{i}).$$
\end{definition}

In other words, to sample from the Gibbs measure {\it conditional on the past} up to generation $i,$ we first sample a $G\in{\mathcal T}_N$
according to $P_N^\mu,$ and assigning each $X_v, i_v\geq i+1, 1\leq k_v\leq N,$ its type according to the ancestors.\noemi{ It is clear
that such measures exist, in fact, $\lambda_N^p$ defined in section \ref{sect:forward} clearly is a $\mu-$Gibbs measure for $p\in\{0,1\},$ and if
$G\in{\mathcal T}_N$ has infinitely many components almost surely, then for all $p\in[0,1],$ the measures $\lambda_N^p$ are 
$\mu-$Gibbs measures. Recall that this is the case if $\mu\in \Gamma_\alpha$ with $\alpha<1/2.$ This is the situation where the Gibbs measure characterization is interesting.}\\
A particularly useful feature of our model is that the only relevant Gibbs measures are of the form $\lambda_N^p.$ Note that the 
$\mu-$Gibbs measures form a convex set, as can be seen easily, and we can characterise the extremal points of this set generalizing
Proposition 1 of \cite{HammondSheffield}.

\begin{proposition}
 \label{thm:char_gibbs_meas} \noemi{Assume $\mu\in \Gamma_\alpha.$}
\begin{itemize}
 \item [(a)]Let $\alpha\in(0,1/2).$ For each fixed $N,$ for each $p\in[0,1],$ there is precisely one extremal $\mu-$Gibbs measure $\lambda_N$ on
 $S_N^\Z$ such that
$\lambda_N(X_{i,k}=a)=p$ for all $i\in\Z, 1\leq k\leq N.$
\item[(b)] Let $\alpha\in(1/2,\infty].$ The only extremal Gibbs measures are $\lambda_N^0$ and $\lambda_N^1.$ For $p\in(0,1),$ the
measures $\lambda_N^p$ are given by $\lambda_N^p=p\lambda_N^0+(1-p)\lambda_N^1.$
\end{itemize}
\end{proposition}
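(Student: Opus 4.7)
The plan is to adapt the proof of Proposition 1 of \cite{HammondSheffield} to general $N$, splitting according to the number of connected components of $G \in \mathcal{T}_N$. The unifying observation, obtained by iterating the Gibbs kernel $\lambda_{N,i}^\xi$ along an ancestral line, is that under any $\mu$-Gibbs measure $\lambda_N$ the type $X_v$ almost surely coincides with the type of every ancestor of $v$. Consequently, in the extended joint distribution on $(X,G)$ where $G$ is sampled from $P_N^\mu$, the types $X$ are constant on each connected component of $G$, and so $\lambda_N$ is determined by the joint law of the colors it assigns to the components of $G$.

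For part (b), the Remark following Theorem \ref{thm:tmrca} gives that, for $\alpha>1/2$, the tree $G$ has a single connected component $P_N^\mu$-almost surely. Combined with the observation above, this forces any $\mu$-Gibbs measure $\lambda_N$ to be concentrated on the two constant configurations $a^{V_N}$ and $A^{V_N}$; setting $p:=\lambda_N(X_v=a)$ (independent of $v$ by the Gibbs relation) one obtains $\lambda_N = p\lambda_N^1 + (1-p)\lambda_N^0$, and the only extremal measures are $\lambda_N^0$ and $\lambda_N^1$.

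For part (a), where $\alpha\in(0,1/2)$ so that $G$ has infinitely many components $P_N^\mu$-almost surely, I would proceed in three steps. Step (i): verify directly that each $\lambda_N^p$ is a $\mu$-Gibbs measure, by checking that under $\lambda_N^p$, conditional on $\xi$ on $\sigma_i$, the sub-forest of $G$ rooted in generations $>i$ remains $P_N^\mu$-distributed independently of $\xi$, while the color of each component meeting generations $\le i$ is pinned down by $\xi$. Step (ii): show extremality of $\lambda_N^p$ via triviality of the shift-invariant $\sigma$-algebra on $S_N^\Z$, using that both $P_N^\mu$ and the i.i.d.\ coloring are shift-invariant in the generation coordinate and that the Bernoulli coloring is mixing. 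Step (iii): show uniqueness, namely that any extremal $\mu$-Gibbs measure $\lambda_N$ with $\lambda_N(X_v=a)=p$ coincides with $\lambda_N^p$, equivalently that under $\lambda_N$ the joint law of the component colors is i.i.d.\ Bernoulli$(p)$.

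The main obstacle is Step (iii), where one has to upgrade the marginal density $p$ and the Gibbs property to full independence of distinct component colors. My strategy is to invoke Theorem \ref{thm:tmrca}(a) to quantify the asymptotic disjointness of ancestral lines of vertices in distinct components and derive a decorrelation estimate of the form $\lambda_N(X_v=X_w=a)\to p^2$ when $v,w$ lie in distinct components whose recent ancestors are typical; extremality then forces shift-invariant events to be trivial, which together with the decorrelation identifies $\lambda_N$ with $\lambda_N^p$. The extra label coordinate $k_v\in\{1,\dots,N\}$ contributes only bookkeeping relative to the Hammond-Sheffield proof at $N=1$, since it is uniform and independent of the generation dynamics.
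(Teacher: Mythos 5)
Your treatment of part (b) is the paper's: for $\alpha>1/2$ the forest has a single component almost surely, so every $\mu$-Gibbs measure is supported on the two constant configurations, and the claim follows. For part (a) your three-step outline is reasonable, but the execution of Steps (ii) and (iii) rests on the wrong notion of extremality. Extremality in the simplex of Gibbs measures is equivalent to triviality of the \emph{tail} $\sigma$-algebra $\bigcap_{m}\sigma_{-m}$, not of the shift-invariant $\sigma$-algebra; a measure can be mixing for the generation shift without being an extremal Gibbs measure (and a nontrivial mixture $\tfrac12\lambda_N^{p_1}+\tfrac12\lambda_N^{p_2}$ is shift-invariant but not extremal). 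So ``extremality forces shift-invariant events to be trivial'' is not available to you, and Step (ii) as stated does not establish extremality of $\lambda_N^p$ either. The paper's route through this point is Lemma \ref{lem:char_gibbs}: for an extremal $\lambda$, the backward martingale $\lambda(X_v=a\mid\sigma_{-m})$ converges $\lambda$-a.s.\ by backward martingale convergence, its limit is a constant $p$ by tail triviality, and a coupling of the ancestral lines of two individuals (coupling the generation coordinates as in \cite{HammondSheffield} and identifying the labels outright) shows $p$ does not depend on $v$. None of this appears in your proposal, and it is the crucial step.

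The second gap is in Step (iii). A pairwise decorrelation $\lambda_N(X_v=X_w=a)\to p^2$ as $v$ and $w$ separate cannot identify $\lambda_N$ with $\lambda_N^p$: uniqueness requires \emph{exact} agreement of all finite-dimensional cylinder probabilities, including for nearby vertices, and pairwise asymptotic independence does not determine the joint law of the component colours. The correct limit to take is in the conditioning depth, not in the separation of the vertices: for a fixed finite set $B$ of individuals there is an a.s.\ finite random time $T$ by which all coalescences among their ancestral lines have occurred, after which the surviving lines lie in distinct components and never meet (this is where Theorem \ref{thm:tmrca}(a) enters); conditioning on $\sigma_{-m}$ and letting $m\to\infty$, Lemma \ref{lem:char_gibbs} shows each surviving ancestor receives type $a$ with conditional probability in $(p-\varepsilon,p+\varepsilon)$, independently across lines, which pins down every finite-dimensional distribution of $\lambda$ and yields $\lambda=\lambda_N^p$. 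You should restructure Step (iii) around this conditioning argument rather than around a spatial decorrelation estimate. Your opening observation (types are constant along ancestral lines, hence on components) and the remark that the label coordinate is only bookkeeping are both correct and consistent with the paper.
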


The proof of Proposition \ref{thm:char_gibbs_meas} follows closely the Proposition 1 of \cite{HammondSheffield}, and
we refer the reader to this work for details. Note that part $(b)$ follows immediately from Theorem \ref{thm:tmrca}, as this implies
that all individuals have the same type almost surely. The crucial step in the proof of part $(a)$ of the proposition is the following Lemma.

\begin{lemma}
\label{lem:char_gibbs}
Let $\lambda$ be a extremal $\mu-Gibbs$ measure. Then there exist $p\in[0,1]$ such that for all $v=(i_v,k_v)\in V_N$
$$
\lim_{m\rightarrow \infty}\lambda(X_v=a|\sigma_{-m})=p\quad \lambda- a.s.
$$
\end{lemma}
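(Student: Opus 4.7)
The plan is to combine a backward martingale convergence argument with the tail triviality that extremality provides, and then identify the resulting constant via a harmonic-function argument. First, for fixed $v \in V_N$, the family $(\sigma_{-m})_{m \ge 0}$ is a decreasing sequence of $\sigma$-algebras, so $M_m^v := \lambda(X_v = a \mid \sigma_{-m})$ is a reverse martingale. L\'evy's downward convergence theorem gives $M_m^v \to M_\infty^v$ both $\lambda$-a.s.\ and in $L^1$, where $M_\infty^v$ is measurable with respect to the tail $\sigma$-algebra $\sigma_{-\infty} := \bigcap_m \sigma_{-m}$.

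The core step is to show that $\sigma_{-\infty}$ is $\lambda$-trivial. Suppose for contradiction that some $E \in \sigma_{-\infty}$ has $q := \lambda(E) \in (0,1)$, and set $\lambda_E := \lambda(\,\cdot \mid E)$ and $\lambda_{E^c} := \lambda(\,\cdot \mid E^c)$, so that $\lambda = q \lambda_E + (1-q)\lambda_{E^c}$. The decisive observation is that $E \in \sigma_{-\infty} \subset \sigma_i$ for \emph{every} $i \in \Z$ (take $m$ so large that $-m \le i$). Hence for any finite $B \subset \{i+1,\dots\}\times\{1,\dots,N\}$, any $x_B \in \{a,A\}^B$, and any $A \in \sigma_i$, the Gibbs property of $\lambda$ applied to the event $A \cap E \in \sigma_i$ gives
\begin{equation*}
\lambda_E(A \cap \{X|_B = x_B\}) \;=\; \frac{1}{q}\int_{A \cap E}\lambda_{N,i}^\xi(X|_B = x_B)\, d\lambda(\xi) \;=\; \int_A \lambda_{N,i}^\xi(X|_B = x_B)\, d\lambda_E(\xi),
\end{equation*}
which is precisely the Gibbs property for $\lambda_E$; symmetrically $\lambda_{E^c}$ is also a $\mu$-Gibbs measure. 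The nontrivial convex decomposition then contradicts extremality. So $\lambda(E) \in \{0,1\}$, and $M_\infty^v = p_v$ a.s.\ with $p_v := \E_\lambda[M_\infty^v] = \lambda(X_v = a) \in [0,1]$.

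It remains to see that $p_v$ does not depend on $v$. From the renewal construction of the ancestral lineage, for $v$ at generation $j > i$,
\begin{equation*}
\lambda_{N,i}^\xi(X_v = a) \;=\; \sum_{\tau \ge 0} \P(\tau_{j-i} = \tau)\,\bar\xi_{i-\tau}, \qquad \bar\xi_n := \frac{1}{N}\sum_{k=1}^N \mathbf{1}_{\{\xi_{(n,k)} = a\}},
\end{equation*}
where $\tau_{j-i}$ is the overshoot below level $i$ of a backward $\mu$-renewal process started at $j$. Crucially this depends only on $j-i$, not on the label $k_v$, because ancestor labels are i.i.d.\ uniform on $\{1,\dots,N\}$. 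Hence $M_m^v = M_m^w$ pathwise whenever $i_v = i_w$, giving $p_v = p_w$ in that case. Setting $f(j) := \lambda(X_{(j,1)} = a)$, the just-established same-generation equality yields $\E_\lambda[\bar X_n] = f(n)$, and the Gibbs property at level $j-1$ then produces
\begin{equation*}
f(j) \;=\; \E_\lambda\bigl[\lambda_{N,j-1}^{\omega}(X_{(j,1)} = a)\bigr] \;=\; \sum_{\eta \ge 1}\mu(\eta)\, f(j-\eta), \qquad j \in \Z.
\end{equation*}
Thus $f \in [0,1]^\Z$ is a bounded $\mu$-harmonic function on $\Z$; since $\mu(\{1\}) > 0$ ensures that $\mathrm{supp}(\mu)$ generates $\Z$ as a group, the Choquet--Deny theorem forces $f$ to be constant, say $f \equiv p$. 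This is the common value claimed.

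The principal obstacle is the tail-triviality step: one must check that conditioning $\lambda$ on a tail event preserves the Gibbs specifications \emph{at every} level $i$, which is exactly where the inclusion $\sigma_{-\infty} \subset \sigma_i$ (valid for all $i\in\Z$) does the work. Once tail triviality is secured, the remainder is structural---label uniformity of the renewal construction disposes of same-generation vertices, and the Choquet--Deny theorem handles the cross-generation case.
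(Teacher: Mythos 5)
Your argument is correct, and it coincides with the paper's for the first two steps: backward (downward) martingale convergence gives existence of the limit, and triviality of the tail $\sigma$-algebra $\bigcap_m \sigma_{-m}$ under an extremal Gibbs measure makes it constant. The paper merely cites tail triviality, whereas you supply the standard proof (conditioning on a tail event $E$ preserves the specification at every level $i$ because $\sigma_{-\infty}\subset\sigma_i$, so $\lambda=q\lambda_E+(1-q)\lambda_{E^c}$ would contradict extremality); that is a welcome addition but not a different idea. Where you genuinely diverge is in showing the constant is independent of $v$. The paper couples the ancestral lines of $v_1$ and $v_2$ in their generation coordinate, following Hammond--Sheffield, and couples the labels completely; this makes the two ancestral lines eventually agree and yields $M^{v_1}_\infty=M^{v_2}_\infty$ almost surely, directly at the level of the random variables. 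You instead use tail triviality a second time to identify $p_v=\E_\lambda[M^v_\infty]=\lambda(X_v=a)$, dispose of same-generation vertices by label exchangeability of the kernels $\lambda^\xi_{N,i}$, and then show that $f(j)=\lambda(X_{(j,1)}=a)$ satisfies $f(j)=\sum_{\eta\ge 1}\mu(\eta)f(j-\eta)$, i.e.\ is a bounded harmonic function for an adapted random walk on $\Z$ (adaptedness from $\mu(\{1\})>0$), hence constant by Choquet--Deny. Both routes are sound. The coupling argument works without first reducing to the one-point marginals, and is the one that transfers verbatim from \cite{HammondSheffield}; your route is more self-contained in that it avoids constructing a successful coupling of heavy-tailed renewal walks, at the cost of leaning on tail triviality twice and importing Choquet--Deny, which in this one-dimensional adapted setting is in any case elementary.
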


{\bf Proof.} \noemi{For fixed $v,$ the existence of the limit follows from the backward martingale convergence theorem, see \cite{JacodProtter}, page 233, and the fact that it is constant follows from the tail triviality of extremal Gibbs measures. It remains to prove that it is independent of $v.$ For this we couple the ancestral lines of two individuals $v_1$ and $v_2$ as in \cite{HammondSheffield} in as far as their $i-$coordinate (the generations) is concerned, and concernig the $k-$coordinate, that is, the label of the individual among the $N$ individuals per generation, we simply couple them completely, which does not change the law of the process. Hence the proof of \cite{HammondSheffield} goes through with only minor changes.}\hfill $\Box$\\

\noemi{For the rest of the proof of Proposition \ref{thm:char_gibbs_meas}, see \cite{HammondSheffield}. The main idea is as follows: For any finite set of individuals, there exists a (random) time $T$
before which the ancestral lines don't meet. This time is finite a.s., and in view of Lemma \ref{lem:char_gibbs},
there exists $p\in[0,1]$ such that the ancestors
alive just after time $T$ get their types independently with probability between $p-\varepsilon$ and $p+\varepsilon.$ This then
implies that $\lambda=\lambda_N^p,$ which, as we recall, conditional on $G\in {\mathcal T}_N$ is induced by the product Bernoulli 
measure on the components of $G$ with success parameter $p.$}

\end{appendix}

\noemi{\paragraph{Acknowledgments} The authors wish to thank an anonymous referee for making valuable suggestions which improved the presentation of the results considerably.}


\begin{thebibliography}{99}
\bibitem{BGT} N.H. Bingham, C.M. Goldie, J.L. Teugels, \emph{Regular variation}, Cambridge Universtiy Press, 1987.
\bibitem{Cano} R. J. Cano, M. K. Borucki, \emph{Revival and identification of bacterial spores in 25- to 40-million-year-old Dominican amber}, In: Science. Bd. 268, Nr. 5213 (1995), 1060-1064.
\bibitem{EK} S. Ethier, T. Kurtz,\emph{ Markov processes: characterization and convergence}, Wiley, 1986

\bibitem{HammondSheffield} A. Hammond, S. Sheffield, \emph{Power law P\'olya's urn and fractional Brownian Motion}, arXiv:0903.1284v3. To appear in Prob. Theory Rel. Fields.
\bibitem{JacodProtter} J. Jacod, P. Protter, \emph{Probability essentials}, Second Edition, Springer 2003.
\bibitem{KKL} I. Kaj, S. Krone, M. Lascoux, \emph{Coalescent theory for seed bank models}, J. Appl. Prob. 38, 285--300 (2001).
\bibitem{Levin1990} D. A. Levin, \emph{The seed bank as a source of genetic novelty in plants}, American Naturalist 135, 563--572 (1990).
\bibitem{Lindvall} T. Lindvall, \emph{On Coupling of Discrete Renewal Processes}, Z. Wahrsch. verw. Gebiete 48, 57--70 (1979).
\bibitem{Lindvall_Book} T. Lindvall, \emph{Lectures on the coupling method}, Wiley 1992.
\bibitem{Moehle} M. M\"ohle, \emph{A convergence theorem for Markov chains arising in population genetics and the coalescent with 
selfing}, Adv. Appl. Prob. 30, 493--512 (1998).
\bibitem{Tellier} A. Tellier, S. J. Y. Laurent, H. Lainer, P. Pavlidis, W.  Stephan, 
\emph{Inference of seed bank parameters in two wild tomato species using ecological and genetic data}, PNAS, Vol. 108 No. 41. 17052--17057 (2011).
\bibitem{Vitalis} R.\ Vitalis, S.\ Gl\'emin, I.\ Oliviere, \emph{When genes got to sleep: The population Genetic Consequences of Seed Dormacy and Monocarpic Perenniality}, The American Naturalist 163, no. 2 (2004).
\bibitem{Flower}     S.Yashina, S. Gubin, S. Maksimovich,A. Yashina, E. Gakhova, D. Gilichinsky, \emph{Regeneration of 
whole fertile plants from 30,000-y-old fruit tissue buried in Siberian permafrost}, PNAS, Vol. 109 No. 10, 4008--4013 (2012).

\end{thebibliography}
\end{document}